\theoremstyle{plain}
\newtheorem{thm}{Theorem}
\newtheorem{lem}[thm]{Lemma}
\newtheorem{prop}[thm]{Proposition}
\newtheorem*{propo}{Proposition}
\newtheorem*{theom}{Theorem}
\theoremstyle{definition}
\newtheorem{defn}[thm]{Definition}
\theoremstyle{remark}
\newcommand{\inv}{^{-1}}
\newcommand{\ov}{\overline}
\newcommand{\til}{\rightarrow}
\newcommand{\sfE}{\mathsf{E}}
\begin{document}
\title{Compact metrizable structures and classification problems}
\author {Christian Rosendal and Joseph Zielinski}
\address{Department of Mathematics, Statistics, and Computer Science (M/C 249)\\
University of Illinois at Chicago\\
851 S. Morgan St.\\
Chicago, IL 60607-7045\\
USA}
\email{rosendal.math@gmail.com}
\urladdr{http://homepages.math.uic.edu/$~$rosendal}

\address{Department of Mathematical Sciences\\
Wean Hall 6113\\
Carnegie Mellon University\\
Pittsburgh, PA 15213\\
USA}
\email{zielinski.math@gmail.com}
\urladdr{http://math.cmu.edu/~zielinski/}

\begin{abstract}
We introduce and study the framework of compact metric structures and their associated notions of isomorphisms such as homeomorphic and bi-Lipschitz isomorphism. This is subsequently applied to model various classification problems in analysis such as isomorphism of $C^*$-algebras and affine homeomorphism of Choquet simplices, where among other things we provide a simple proof of the completeness of the isomorphism relation of separable, simple, nuclear $C^*$-algebras recently established by M. Sabok.
\end{abstract}

\subjclass[2010]{03E15}
\maketitle

\section{Introduction}
The present paper deals with a general approach to determining the complexity of certain classification problems in Analyis and Topology through representing objects as compact metric or metrizable structures. The research here began with a question posed by Pierre-Emmanuel Caprace concerning the complexity of isomorphism of totally disconnected locally compact groups and culminated in the determination by the second author of the complexity of the homeomorphism relation between compact metrizable spaces \cite{Zielinski2016635}. Our goal in this paper is to present some general techniques for classifying objects via compact metric structures and along the way produce a substantially simplified proof of M. Sabok's result on the complexity of isomorphism of separable, simple, nuclear $C^*$-algebras \cite{sabok2015completeness}.

The concept of a compact metrizable or metric structure is fairly simple. Namely, given a countable relational language $\mathcal L$, a compact metrizable $\mathcal L$-structure $\mathcal M$ is a compact metrizable space $M$ equipped with closed relations $R^\mathcal M \subseteq M^n$ for every $n$-ary relation symbol $R\in \mathcal L$. The structure is metric, as opposed to metrizable, if, moreover, $M$ is given a fixed metric $d^\mathcal M$ compatible with its topology. One may now talk about isometric or bi-Lipschitz isomorphism of compact metric $\mathcal L$-structures and, similarly, homeomorphic isomorphism of compact metrizable $\mathcal L$-structures.

Many classical mathematical structures may be coded up to a relevant notion of isomorphism as compact metric or metrizable structures. For an easy example, observe that a Choquet simplex $K$ can be coded up to affine homeomorphism by the compact space $K$ along with the closed ternary relation 
$$
R=\{(x,{\frac 12}x+\frac 12y, y)\mid x,y\in K\}
$$
coding the midpoint between any two points. 

The basic fact concerning these structures is that the complexity of the relation of homeomorphic isomorphism is independent of the relational language $\mathcal L$.

\begin{propo} [See Proposition \ref{bireducible to Egrp}]
For any countable relational language $\mathcal L$,  the relation of homeomorphic isomorphism between compact metrizable $\mathcal L$-structures is Borel bireducible with the complete orbit equivalence relation $\mathsf{E}_{\mathrm{grp}} $ induced by a Polish group action.
\end{propo}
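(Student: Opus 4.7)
The strategy is to code compact metrizable $\mathcal{L}$-structures as points of a Polish space carrying a Polish group action whose orbits contain the isomorphism classes; reducibility to $\mathsf{E}_{\mathrm{grp}}$ will then follow from its universality. Fix the Hilbert cube $Q = [0,1]^{\mathbb{N}}$, and let
\[
X_{\mathcal{L}} \subseteq K(Q) \times \prod_{R \in \mathcal{L}} K(Q^{n_{R}})
\]
be the subspace of tuples $(M, (R^{\mathcal{M}})_{R \in \mathcal{L}})$ with $R^{\mathcal{M}} \subseteq M^{n_{R}}$; this condition is closed, so $X_{\mathcal{L}}$ is Polish. Since every compact metrizable space embeds in $Q$, every compact metrizable $\mathcal{L}$-structure is isomorphic to a member of $X_{\mathcal{L}}$. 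The group $G := \mathrm{Homeo}(Q)$ acts continuously on $X_{\mathcal{L}}$ by pushforward, and structures in the same $G$-orbit are plainly isomorphic.

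The crux is the converse: an isomorphism $f : M \to M'$ between structures in $X_{\mathcal{L}}$ need not extend to a self-homeomorphism of the ambient $Q$. The plan is to resolve this via Anderson's Z-set unknotting theorem, which guarantees such extensions when $M$ and $M'$ are Z-sets. Choose a Z-embedding $\psi : Q \hookrightarrow Q$ --- for instance $\psi(q) = (q, 0)$ under a fixed identification $Q \cong Q \times Q$ --- and define
\[
\phi : X_{\mathcal{L}} \to X_{\mathcal{L}}, \qquad (M, (R^{\mathcal{M}})) \mapsto \bigl(\psi(M), (\psi^{n_{R}}(R^{\mathcal{M}}))_{R \in \mathcal{L}}\bigr).
\]
This is continuous, sends each structure to an isomorphic one whose underlying compactum is a Z-set, and so is a Borel reduction of $\cong_{\mathcal{L}}$ to its restriction to the subspace $X_{\mathcal{L}}^{Z} \subseteq X_{\mathcal{L}}$ of Z-set structures. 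On $X_{\mathcal{L}}^{Z}$, Z-set unknotting identifies $\cong_{\mathcal{L}}$ with the $G$-orbit equivalence relation: given an isomorphism $f : M \to M'$, extend it to $g \in G$, and note that $g$ then automatically carries $R^{\mathcal{M}}$ to $R^{\mathcal{M}'}$ since $R^{\mathcal{M}} \subseteq M^{n_{R}}$. Composing with the (tautological) Borel reduction of $G$-orbit equivalence into $\mathsf{E}_{\mathrm{grp}}$ yields $\cong_{\mathcal{L}} \leq_{B} \mathsf{E}_{\mathrm{grp}}$.

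For the reverse inequality, the second author's theorem \cite{Zielinski2016635} establishes that the homeomorphism relation on compact metrizable spaces --- the instance $\mathcal{L} = \emptyset$ --- is already $\mathsf{E}_{\mathrm{grp}}$-complete. The trivial continuous map $M \mapsto (M, (\emptyset)_{R \in \mathcal{L}})$, interpreting every relation symbol as the empty relation, is then a Borel reduction $\cong_{\emptyset} \leq_{B} \cong_{\mathcal{L}}$, which gives $\mathsf{E}_{\mathrm{grp}} \leq_{B} \cong_{\mathcal{L}}$.

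The only nontrivial technical input is the invocation of Z-set unknotting in extending a structure isomorphism to an ambient self-homeomorphism of $Q$, together with the verification that the fixed $\psi$ is a Z-embedding. Everything else --- that $X_{\mathcal{L}}$ is Polish, that the action of $G$ and the map $\phi$ are continuous, and that $\phi$ lands in $X_{\mathcal{L}}^{Z}$ --- is a routine assembly of standard facts from invariant descriptive set theory and Hilbert cube topology.
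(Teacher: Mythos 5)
Your proposal is correct and follows essentially the same route as the paper: re-embed each structure so that its domain becomes a Z-set in $\mathcal{Q}$ (the paper uses the map $\iota((x_n))=(\tfrac12 x_n+\tfrac14)$ into the pseudo-interior, you use a face $Q\times\{0\}$), invoke Anderson's Z-set extension theorem to identify $\cong_{\mathcal L}$ with the orbit relation of $\operatorname{Homeo}(\mathcal{Q})$ acting by pushforward, and obtain the converse from \cite{Zielinski2016635} via a trivial expansion of the empty language (the paper interprets each $R$ as the full relation $M^{\alpha(R)}$ where you use the empty relation; both work). The only differences are these cosmetic choices.
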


The novelty here is that the relation of homeomorphic isomorphism between compact metrizable $\mathcal L$-structures is always Borel reducible to $\mathsf{E}_{\mathrm{grp}} $. The reduction in the other direction was established in \cite{Zielinski2016635} relying, in turn, on \cite{sabok2015completeness}. Namely, in  \cite{sabok2015completeness}, Sabok shows that the complete orbit equivalence relation $\mathsf{E}_{\mathrm{grp}} $ is Borel reducible to the relation of affine homeomorphism of Choquet simplices. Subsequently, in \cite{Zielinski2016635}, a simple method is given for reducing homeomorphic isomorphism between compact metrizable $\mathcal L$-structures to homeomorphism of compact metrizable spaces, i.e., to homeomorphic isomorphism of compact metrizable structures of the empty language. The specific language $\mathcal L$ treated in in \cite{Zielinski2016635} consists of a single ternary predicate $R$ used for coding the midpoint relation in Choquet simplices, but, as is pointed out there, the method is completely general and easily applies to any other countable language. 

In Section \ref{complete}, we show how to entirely bypass the involved construction of  \cite{sabok2015completeness} to obtain a simple and direct proof of the following fact.
\begin{propo}
Let $\mathcal L$ be the language consisting of a unary predicate and two binary relation symbols. Then the complete orbit equivalence relation $\mathsf{E}_{\mathrm{grp}}$ induced by ${\rm Iso}(\mathbb U)\curvearrowright F({\rm Iso}(\mathbb U))$ is Borel reducible to homeomorphic isomorphism of compact metrizable $\mathcal L$-structures.
\end{propo}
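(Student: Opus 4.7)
The plan is to produce a Borel reduction $F \mapsto \mathcal M_F$ from $F({\rm Iso}(\mathbb U))$ to compact metrizable $\mathcal L$-structures by realizing $F$ as a closed subset of a canonical compact $G$-space, and using the two binary relations to record the graphs of two topological generators of the group acting on that space.

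\textbf{Step 1} (passage to a Roelcke-precompact group). I would first replace $G := {\rm Iso}(\mathbb U)$ by $G_1 := {\rm Iso}(\mathbb U_1)$, the isometry group of the bounded Urysohn sphere. Both left-translation actions $G \curvearrowright F(G)$ and $G_1 \curvearrowright F(G_1)$ realize $\mathsf{E}_{\mathrm{grp}}$ and are Borel bireducible, so it suffices to reduce from $G_1 \curvearrowright F(G_1)$. The advantage of $G_1$ is that it is Roelcke precompact: its Roelcke compactification $K := R(G_1)$ is a compact metrizable space in which $G_1$ embeds as a dense subset, the left- and right-translation actions of $G_1$ extend continuously to $K$, these two actions commute, and inversion extends continuously to $K$ as well.

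\textbf{Step 2} (construction). Pick $g_1, g_2 \in G_1$ topologically generating $G_1$; this is available since $G_1$ has ample generics and is topologically finitely generated. For $F \in F(G_1)$, define $\mathcal M_F$ on the compact underlying space $K$ by the interpretations
\[
    P^{\mathcal M_F} := \overline{F^{-1}}, \qquad R_i^{\mathcal M_F} := \{(x, g_i \cdot x) : x \in K\} \quad (i = 1, 2).
\]
All three interpretations are closed, with each $R_i^{\mathcal M_F}$ the graph of a continuous self-map of $K$. The assignment $F \mapsto \mathcal M_F$ is Borel, since closure in a fixed compact metric space is a Borel operation on its Effros Borel space.

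\textbf{Step 3} (verification). For the forward direction, if $F' = hF$, then right translation $\rho_{h^{-1}}\colon K \to K$, $x \mapsto x h^{-1}$, is a self-homeomorphism of $K$ commuting with every left-translation (so preserving $R_1, R_2$) and mapping $\overline{F^{-1}}$ onto $\overline{(hF)^{-1}} = \overline{F'^{-1}}$; hence it exhibits an isomorphism $\mathcal M_F \to \mathcal M_{F'}$. For the reverse direction, any isomorphism $\phi\colon \mathcal M_F \to \mathcal M_{F'}$ is a self-homeomorphism of $K$ commuting with left translation by $g_1$ and $g_2$, hence (by density of $\langle g_1, g_2 \rangle$ in $G_1$ and continuity) with the entire left $G_1$-action on $K$. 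A rigidity step then identifies $\phi$ as right translation $\rho_h$ for some $h \in G_1$. It follows that $\overline{F'^{-1}} = \phi(\overline{F^{-1}}) = \overline{F^{-1} h} = \overline{(h^{-1} F)^{-1}}$, and intersecting with the dense subset $G_1 \subseteq K$ (where $F$ and $F'$ remain closed) gives $h^{-1} F = F'$.

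The main obstacle is the rigidity invoked in Step 3: showing that every self-homeomorphism of the Roelcke compactification $K = R(G_1)$ commuting with the entire left $G_1$-action arises as right translation by an element of $G_1$. This rests on the dense, simply transitive left action of $G_1$ on itself inside $K$, but ensuring that the candidate element actually lies in $G_1$ (rather than in some strictly larger completion) is the delicate heart of the argument.
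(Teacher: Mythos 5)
Your construction follows the same architecture as the paper's proof of this proposition: pass to the Roelcke precompact universal group $\operatorname{Iso}(\mathbb U_1)$, work inside its Roelcke compactification, let the two binary relations record graphs of translation by two topological generators and the unary predicate record the closed set, and, for the converse, argue that any homeomorphic isomorphism must be a translation by a group element (your left/right conventions are swapped relative to the paper, which is why your passage to $F^{-1}$ is the correct fix, and that part is fine). The problem is that the step you explicitly leave open --- that a self-homeomorphism $\phi$ of $K$ commuting with the extended left $G_1$-action is right translation by an element \emph{of $G_1$}, and not by some ideal point of $K\setminus G_1$ --- is precisely where all the content of the proposition lies. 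Equivariance only tells you $\phi(g)=g\cdot\phi(1)$ for $g\in G_1$, i.e.\ that $\phi$ is determined by $\phi(1)\in K$; nothing so far forces $\phi(1)\in G_1$, and the Roelcke compactification of $\operatorname{Iso}(\mathbb U_1)$ is vastly larger than the group. So, as written, the proposal has a genuine gap at its core. For the record, the missing step is exactly the Baire category argument the paper runs: $G_1$ is Polish and dense in $K$, hence a dense $G_\delta$ and comeagre, and since $\phi$ is a homeomorphism of $K$, the set $\phi^{-1}[G_1]\cap G_1$ is comeagre, in particular nonempty; picking $g$ in it and using equivariance gives $\phi(1)=\lambda_{g^{-1}}(\phi(g))=g^{-1}\phi(g)\in G_1$, whence $\phi$ agrees with $\rho_{\phi(1)}$ on the dense set $G_1$ and therefore on $K$. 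In your formulation this is actually \emph{easier} than in the paper (your relations are honest graphs of continuous maps on all of $K$, so you avoid the paper's manoeuvring with $\overline{R_s}\cap\mathbb G^2=R_s$ and the set $C=\bigcap_w C'w$), but it must be carried out, not flagged.

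Two further assertions need repair. First, upgrading commutation with $\lambda_{g_1},\lambda_{g_2}$ to commutation with every $\lambda_g$ requires continuity of $g\mapsto \lambda_g(x)$ for fixed $x\in K$; this is true --- e.g.\ from the estimate $d_\wedge(gx,g'x)\leqslant d(g^{-1},g'^{-1})$ for a left-invariant $d$, extended to $K$ by density --- but it is not automatic from each $\lambda_g$ being a homeomorphism, and you only wave at ``continuity.'' Second, your appeal to ``ample generics'' to get two topological generators of $\operatorname{Iso}(\mathbb U_1)$ itself is not a correct justification (ample generics is neither clearly available for $\operatorname{Iso}(\mathbb U_1)$ nor the property that yields topological $2$-generation); the result actually cited in the paper (Solecki) is topological $2$-generation of $\operatorname{Iso}(\mathbb U)$. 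The paper avoids your difficulty by taking generators $a,b$ of a dense subgroup of the embedded copy of $\operatorname{Iso}(\mathbb U)$ and using $\overline{\operatorname{Iso}(\mathbb U)}\subseteq K$, rather than all of $K$, as the domain of the structure; you could do the same while keeping your conventions, since everything above (comeagreness of the group in its closure, the continuity estimate, the translation rigidity) goes through with $K$ replaced by that closure.
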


Coupling this with \cite{Zielinski2016635} we see that the complete orbit equivalence relation is reducible to homeomorphism of compact metrizable spaces and thus isomorphism of commutative $C^*$-algebras (via the map $K\mapsto C(K)$). Moreover, the map $K\mapsto M(K)$ taking a compact metrizable space to its simplex of Borel probability measures is then a further reduction to the relation of affine homeomorphism of Choquet simplices, thereby obtaining the main result of \cite{sabok2015completeness}.

All this concerns homeomorphic isomorphism; turning instead to  isometric and bi-Lipschitz isomorphism of compact metric structures, the complexity is very different. First, a simple modification of a proof of M. Gromov \cite{gromov}, shows that, for any countable relational language $\mathcal L$, the following holds.
\begin{theom}[See Theorem \ref{isometry smooth}]
Isometric isomorphism of compact metric $\mathcal L$-structures is smooth.
\end{theom}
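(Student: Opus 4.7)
The plan is to adapt Gromov's proof for the empty-language case by constructing, for any compact metric $\mathcal{L}$-structure, a Borel complete invariant taking values in a standard Borel space, namely the Effros Borel space of closed subsets of a product of cubes.

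To each compact metric $\mathcal{L}$-structure $\mathcal{M} = (M, d^{\mathcal{M}}, (R^{\mathcal{M}})_{R\in \mathcal{L}})$ and each sequence $\bar{x} = (x_n)_{n<\omega} \in M^{\omega}$, I would associate the augmented distance matrix
\[
A_{\mathcal{M}}(\bar{x}) \;=\; \Bigl(\,(d^{\mathcal{M}}(x_i,x_j))_{i,j<\omega},\; \bigl(d_n^{\mathcal{M}}((x_{i_1},\ldots,x_{i_n}), R^{\mathcal{M}})\bigr)_{\bar{i}\in\omega^{n},\, R\in\mathcal{L}}\,\Bigr),
\]
where $d_n^{\mathcal{M}}$ denotes the sup-metric on $M^n$ and the second entry records the distance from an $n$-tuple of sampled points to the closed relation $R^{\mathcal{M}}$. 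Setting
\[
\Phi(\mathcal{M}) \;=\; \{A_{\mathcal{M}}(\bar{x}) : \bar{x}\in M^{\omega}\} \;\subseteq\; X \;:=\; [0,\infty)^{\omega^{2}}\times \prod_{R\in\mathcal{L}}[0,\infty)^{\omega^{n_R}},
\]
the compactness of $M^{\omega}$ and continuity of $A_{\mathcal{M}}$ make $\Phi(\mathcal{M})$ a compact subset of the Polish space $X$, hence an element of the Effros Borel space $F(X)$.

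The crux of the argument is that $\Phi$ is a complete invariant for isometric isomorphism. The forward direction is immediate. For the converse, suppose $\Phi(\mathcal{M})=\Phi(\mathcal{N})$ and pick a dense $\bar{x}$ in $M$; there is $\bar{y}\in N^{\omega}$ with $A_{\mathcal{M}}(\bar{x})=A_{\mathcal{N}}(\bar{y})$. Agreement of distance matrices makes $x_i\mapsto y_i$ an isometry on a dense set, which extends by uniform continuity to an isometric embedding $f:M\to N$. Agreement of the distance-to-relation components, combined with continuity and density, propagates to the identity $d_n^{\mathcal{M}}(p,R^{\mathcal{M}})=d_n^{\mathcal{N}}(f^{n}(p),R^{\mathcal{N}})$ for every $p\in M^{n}$; since the relations are closed, this forces $f^{n}(R^{\mathcal{M}})\subseteq R^{\mathcal{N}}$. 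A symmetric construction yields a relation-preserving isometric embedding $g:N\to M$, and the classical fact that an isometric self-embedding of a compact metric space is surjective implies that $f$ and $g$ are both bijections; thus $f$ is an isometric isomorphism of $\mathcal{L}$-structures.

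What remains is to exhibit $\mathcal{M}\mapsto \Phi(\mathcal{M})$ as a Borel map. Parametrizing compact metric $\mathcal{L}$-structures in a convenient standard Borel way---e.g., as tuples $(F,\delta,(S_R)_R)$ with $F\subseteq Q$ a closed subset of the Hilbert cube, $\delta$ a compatible metric on $F$, and each $S_R\subseteq F^{n_R}$ closed---the condition that a basic open subset of $X$ meets $\Phi(\mathcal{M})$ translates to a Borel condition on the parameters. Since equality on $F(X)$ is Borel, this produces a Borel reduction of isometric isomorphism to equality on a standard Borel space, which is precisely smoothness. The main technical obstacle is the last step: a careful choice of parametrization and a verification that the distance-to-relation functions depend Borelly on the structure. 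Everything else is a direct extension of Gromov's classical argument.
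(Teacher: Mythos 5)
Your proof is correct, but it takes a genuinely different route from the paper. The paper's invariant is combinatorial: for each finite pattern $\zeta=(S_1,\dots,S_n)$ prescribing which index-tuples must lie in which relations, it records the compact set $C_\zeta(\mathcal X)\subseteq\mathbb{R}^{n^2}$ of distance matrices of $n$-tuples realizing that pattern; the converse direction then needs a dense sequence whose relational tuples are dense in each $R_k^{\mathcal X}$, a diagonalization over finite tuples to build the embedding $f$, and a separate surjectivity argument applying the ``isometric self-embeddings of compacta are onto'' fact to $g\circ f$ restricted to each relation $R_k^{\mathcal X}$ with the sup-metric. You instead augment the distance matrix of an \emph{infinite} sequence by the continuous functions $p\mapsto d_n(p,R^{\mathcal M})$; this buys you two simplifications: no diagonalization is needed (a single $\bar y$ matching a dense $\bar x$ suffices, and $\bar y$ need not be dense), and the identity $d_n^{\mathcal M}(p,R^{\mathcal M})=d_n^{\mathcal N}(f^n(p),R^{\mathcal N})$, obtained by continuity and density, immediately gives relation preservation in \emph{both} directions (if $f^n(p)\in R^{\mathcal N}$ then $d_n^{\mathcal M}(p,R^{\mathcal M})=0$, so $p\in R^{\mathcal M}$ by closedness) --- you should say this one sentence explicitly, since your written conclusion only records $f^n(R^{\mathcal M})\subseteq R^{\mathcal N}$, and without the reverse inclusion you would otherwise need the paper's compactness argument on each relation. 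The price is a larger invariant space (a hyperspace of an infinite product rather than a sequence of compact subsets of Euclidean spaces), which is harmless for smoothness. Two small housekeeping points: take values in $[0,\infty]$ (or truncate) so that empty relations, whose distance function is identically $+\infty$, are handled; and for the Borelness of $\mathcal M\mapsto\Phi(\mathcal M)$ it is cleaner to use the paper's own parametrization by closed subsets of the Urysohn space with Kuratowski--Ryll-Nardzewski selectors, rather than carrying a compatible metric $\delta$ as a parameter --- with selectors for $M$ and for each $R^{\mathcal M}$, the condition that $\Phi(\mathcal M)$ meets a basic open set is a countable existential over Borel conditions, exactly the verification the paper itself leaves implicit.
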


Also, in \cite{rosendal2005cofinal} the relation of bi-Lipschitz homeomorphism between compact metric spaces was shown to be Borel bi-reducible with the complete $K_\sigma$ equivalence relation and further improvements to this allows us to add any countable language $\mathcal L$ without altering the complexity.

\begin{theom}[See Theorem \ref{bi-Lipschitz}]
The relation of bi-Lipschitz isomorphism between compact metric $ \mathcal{L} $-structures is bi-reducible with the complete $K_{\sigma}$ equivalence relation.
\end{theom}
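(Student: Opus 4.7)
The plan is to establish the bi-reducibility in two independent parts. For the lower bound, I would invoke the main result of \cite{rosendal2005cofinal}, which yields a Borel reduction from the complete $K_\sigma$ equivalence relation to bi-Lipschitz homeomorphism of compact metric spaces. Composing this with the trivial Borel map that sends a compact metric space $M$ to the compact metric $\mathcal{L}$-structure on $M$ in which every relation symbol is interpreted as the empty set immediately gives the desired reduction into bi-Lipschitz isomorphism of compact metric $\mathcal{L}$-structures.

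For the upper bound, the plan is to show that bi-Lipschitz isomorphism of compact metric $\mathcal{L}$-structures is itself a $K_\sigma$ equivalence relation on a suitable standard Borel parametrization, so that it Borel reduces to the complete $K_\sigma$ equivalence relation by universality. First I would parametrize compact metric $\mathcal{L}$-structures by coding each one as an enumeration of the distances on a countable dense subset together with, for each $n$-ary symbol $R \in \mathcal{L}$, the closed relation it defines in $M^n$ (for instance, as a compact subset of $M^n$ in Hausdorff metric, or equivalently as the trace of $R$ on the tuples from the enumerated dense set). Restricting to structures with diameter at most $D$ places the parameters in a compact Polish space $X_D$, and the full parameter space is $\bigsqcup_D X_D$. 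Next, for each $K \geq 1$ and each $D$, define
$$
E_{K,D} = \{(\mathcal{M},\mathcal{N}) \in X_D \times X_D : \text{there is a } K\text{-bi-Lipschitz isomorphism } \mathcal{M} \to \mathcal{N}\}.
$$
An Arzel\`a--Ascoli argument, as in \cite{rosendal2005cofinal}, shows that $E_{K,D}$ is closed (hence compact) in $X_D \times X_D$: given $(\mathcal{M}_k,\mathcal{N}_k) \to (\mathcal{M},\mathcal{N})$ with witnessing $K$-bi-Lipschitz isomorphisms $f_k$, one extracts a subsequential limit $f$ which is automatically a $K$-bi-Lipschitz bijection, and preservation of each closed relation $R^{\mathcal{M}}$ passes to the limit because $R^{\mathcal{M}_k} \to R^{\mathcal{M}}$ in Hausdorff metric on $M^{n_R}$. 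Bi-Lipschitz isomorphism is then the countable union $\bigcup_{K,D \in \mathbb{N}} E_{K,D}$, and hence $K_\sigma$.

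The main obstacle is the Arzel\`a--Ascoli step: making precise the sense in which $f_k \colon \mathcal{M}_k \to \mathcal{N}_k$ on varying underlying spaces has a subsequential limit $f$, and checking that $f$ remains a bijection rather than merely an equidistortion. The $K$-bi-Lipschitz condition on both $f_k$ and $f_k^{-1}$ is what forces surjectivity in the limit, and this is already present in \cite{rosendal2005cofinal}; the additional content here, namely relation-preservation, is essentially automatic once the parametrization is chosen so that each relation converges in its own Hausdorff metric. Having established $E$ is $K_\sigma$, reducibility to the complete $K_\sigma$ equivalence relation follows from the standard universality theorem for complete $K_\sigma$ equivalence relations.
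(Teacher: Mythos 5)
Your lower bound is fine and coincides with the paper's (which simply cites \cite{rosendal2005cofinal} for the empty language and notes that adding trivially interpreted relations is a reduction). The gap is in the upper bound, and it is exactly where the real work lies. The claim that structures of diameter at most $D$ are parametrized by a \emph{compact} Polish space $X_D$ is false: in the dense-sequence/distance-matrix coding you describe, the set of codes whose completion is compact is not closed (the codes of the $k$-point equilateral spaces of diameter $1$ converge pointwise to a code of an infinite uniformly discrete, hence non-compact, space), and this is unavoidable --- by Gromov's compactness criterion a uniform diameter bound without a uniform bound on the size of $\epsilon$-nets gives no precompactness, and in the alternative coding by compact subsets of the Urysohn space the diameter-$\leqslant D$ part of $K(\mathbb U)$ is closed but not compact. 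Worse, in your chosen coding the sets $E_{K,D}$ are not even closed: fix a nontrivial compact $A$ and let $\mathcal N_k$ be the code of $A$ whose enumeration begins with $k$ copies of a single point of $A$ followed by a dense enumeration; each pair $(\mathcal M, \mathcal N_k)$, with $\mathcal M$ a fixed code of $A$, lies in $E_{1,D}$, yet the codes $\mathcal N_k$ converge pointwise to the code of a one-point space, so the limit pair escapes $E_{K,D}$ for every $K$. Pointwise convergence of distance matrices simply does not give the Gromov--Hausdorff-type convergence your Arzel\`a--Ascoli extraction needs. If you instead code structures as elements of $K(\mathbb U)\times\prod_R K(\mathbb U^{\alpha(R)})$, then the $K$-bi-Lipschitz relation \emph{is} closed (a graph-limit argument works there), but it is closed and non-compact in a non-$\sigma$-compact space, so you only get that the relation is $F_\sigma$; the universality theorem you invoke is for $K_\sigma$ equivalence relations, not $F_\sigma$ ones, so the final reduction step does not follow.

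This is precisely why the paper does not try to show the relation is literally $K_\sigma$ on a parameter space. Instead it assigns, in a Borel way, the invariants $E_{\zeta,\alpha}(\mathcal X)$: compact sets of $\alpha$-perturbations of distance matrices of $n$-tuples that are $r_k$-dense in $X$, realize prescribed instances of the relations, and are $t_{p,k}$-dense in each $R_p^{\mathcal X}$. It then proves that $\mathcal X$ and $\mathcal Y$ are bi-Lipschitz isomorphic if and only if there is $c\in\mathbb Q_+$ with $E_{\zeta,\alpha}(\mathcal X)\subseteq E_{c\zeta,c\alpha}(\mathcal Y)$ for all $\zeta,\alpha$, a relation that is essentially $K_\sigma$ by Example (ii) below Definition 18 of \cite{rosendal2005cofinal}. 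The nontrivial direction --- reconstructing an isomorphism from the containments --- is a Gromov-style diagonalization in which the density parameters $r_k$ and $t_{p,k}$ are what guarantee that the limit map is onto $Y$ and carries each $R_p^{\mathcal X}$ onto $R_p^{\mathcal Y}$; this is the correct replacement for the Arzel\`a--Ascoli step in your sketch. To repair your proposal you would either have to carry out this invariant construction (as in Theorem \ref{bi-Lipschitz}) or find a genuinely different device for converting the $F_\sigma$ description into an essentially $K_\sigma$ one; as written, the compactness claims on which your upper bound rests are not available.
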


As an application of these ideas, we investigate different classes of Polish groups with canonical compactifications such as the locally compact and Roelcke precompact.
\begin{theom}[See Theorem \ref{main}]
The relation of topological isomorphism between locally compact and Roelcke precompact Polish groups is classifiable by compact metrizable structures and is thus Borel reducible to $\mathsf{E}_{\rm grp}$.
\end{theom}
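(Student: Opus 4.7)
The plan is to define, for each Polish group $G$ in the allowed class, a compact metrizable structure $\mathcal M_G$ in a fixed countable relational language $\mathcal L$ — a unary predicate $P$, a binary relation $I$, and a ternary relation $M$ — such that the assignment $G \mapsto \mathcal M_G$ is Borel and $G \cong H$ as topological groups if and only if $\mathcal M_G \cong \mathcal M_H$ as compact metrizable $\mathcal L$-structures. The theorem then follows from the first Proposition of the introduction, which reduces homeomorphic isomorphism of such structures to $\mathsf{E}_{\mathrm{grp}}$. The two subclasses require two different compactifications of $G$, but the recipe is uniform: $P$ marks a distinguished point, $I$ codes inversion, and $M$ codes multiplication, all as closed relations.

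For a locally compact Polish group $G$, I would take the underlying space of $\mathcal M_G$ to be the one-point compactification $G^+ = G \cup \{\infty\}$, set $P^{\mathcal M_G} = \{\infty\}$, and define $I^{\mathcal M_G} \subseteq (G^+)^2$ and $M^{\mathcal M_G} \subseteq (G^+)^3$ to be the topological closures of the graphs of inversion and multiplication on $G$. Joint continuity of the group operations on $G$ gives $I^{\mathcal M_G} \cap G^2 = \mathrm{gr}(\iota_G)$ and $M^{\mathcal M_G} \cap G^3 = \mathrm{gr}(m_G)$, so any homeomorphism of these structures preserving $P$, $I$, and $M$ must send $\infty_G$ to $\infty_H$ and so restrict to a bijection $G \to H$ that preserves multiplication, hence a topological group isomorphism. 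Conversely, any such isomorphism extends uniquely to a homeomorphism of the one-point compactifications preserving the structure.

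For a Roelcke precompact Polish group $G$, I would take the underlying space of $\mathcal M_G$ to be the Roelcke compactification $R(G)$, with $P^{\mathcal M_G} = \{e\}$ and with $I^{\mathcal M_G}$, $M^{\mathcal M_G}$ again the closures in $R(G)^2$ and $R(G)^3$ of the graphs of inversion and multiplication on the dense subgroup $G$. As before, these intersections with the $G$-coordinates recover the original operations; the point is to argue that an arbitrary structure-preserving homeomorphism $R(G) \to R(H)$ must restrict to a bijection $G \to H$. For this I would use that on a Roelcke precompact Polish group, inversion and multiplication extend to $R(G)$ as a compact metrizable $\ast$-semigroup operation in which the group $G$ appears as the (definable) group of units; any $\mathcal L$-structure isomorphism then preserves this definable subset, and since the Roelcke uniformity generates the original Polish topology on $G$ the induced bijection $G \to H$ is a homeomorphism, hence a topological group isomorphism. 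The reverse direction is routine, as every topological group isomorphism is Roelcke-uniformly continuous and so extends uniquely to the Roelcke compactifications.

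The principal obstacle is twofold. The conceptually harder point is identifying $G$ as a canonical definable subset of its Roelcke compactification as a $\mathcal L$-structure, which requires invoking the structural theory of Roelcke precompact Polish groups to ensure that the extensions of the group operations give a semigroup whose units are exactly $G$. The more bookkeeping-heavy point is verifying Borelness of $G \mapsto \mathcal M_G$ with respect to standard parametrizations of Polish groups in the two classes and of compact metrizable $\mathcal L$-structures — in particular, checking that the one-point compactification (in the locally compact case) and the Roelcke completion (in the Roelcke precompact case) can be produced uniformly in the group by a Borel procedure. Once both points are settled, the Proposition supplies the desired reduction to $\mathsf{E}_{\mathrm{grp}}$.
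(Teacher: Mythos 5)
Your locally compact case is essentially the paper's argument (the paper handles compact groups by $(G,\mathrm{Mult}_G)$ alone and non-compact ones via the one-point compactification with a unary predicate at $\infty$; adding an inversion relation is harmless). The genuine gap is in the Roelcke precompact case. Your argument hinges on the claim that inversion and multiplication extend to the Roelcke compactification $R(G)$ so as to make it a compact $\ast$-semigroup whose group of units is exactly $G$, and that this unit group is ``definable'' from your structure. No such structural theory is available for arbitrary Roelcke precompact Polish groups: the Roelcke compactification is in general not a (even semitopological) semigroup --- multiplication need not extend at all --- and in your structure the symbol $M^{\mathcal M_G}$ is merely the \emph{closure of the graph} of multiplication, a closed ternary relation, not the graph of an extended operation; so ``group of units'' is not something an $\mathcal L$-isomorphism can be said to preserve without first proving exactly the kind of extension theorem you are assuming. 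Marking the identity with $P=\{e\}$ does not help, since the problem is to show that a structure-preserving homeomorphism $R(G)\to R(H)$ carries the subset $G$ onto $H$ at all.

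The paper avoids any appeal to semigroup structure by a Baire category argument, and this is the missing idea. Given a homeomorphic isomorphism $\psi\colon (M_G,\overline{\mathrm{Mult}_G})\to (M_H,\overline{\mathrm{Mult}_H})$, set $G'=\psi[G]$; then $G'$ is a dense $G_\delta$ subset of $M_H$ carrying a Polish group structure with graph $\overline{\mathrm{Mult}_H}\cap (G')^3$, and $H\cap G'$ is comeagre in $M_H$, hence in both $H$ and $G'$. For $f,h\in H\cap G'$ one checks that $hf^{-1}$ computed in $H$ agrees with the value computed in $G'$: if $(y,f,h)\in \mathrm{Mult}_{G'}\subseteq \overline{\mathrm{Mult}_H}$, approximate it by triples $(k_n,f_n,h_n)\in\mathrm{Mult}_H$ and use continuity of the operations of $H$ to get $k_n=h_nf_n^{-1}\to hf^{-1}$, so $y=hf^{-1}$. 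Thus $H\cap G'$ is a comeagre common subgroup of $H$ and $G'$, forcing $H=H\cap G'=G'$, and $\psi\upharpoonright G$ is a topological isomorphism. Separately, note that the Borel-uniformity you defer is not pure bookkeeping: the paper constructs, uniformly in $G$ via Effros--Borel selectors, a compatible proper metric on a locally compact $G$ (hence a metric on $G\cup\{\ast\}$) and the Roelcke metric $d_\wedge^G$, and only then embeds into the Hilbert cube; some such explicit construction is needed before one can even speak of a Borel map into $\mathfrak K_{\mathcal L}$.
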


In case the compactification turns out to be totally disconnected, i.e., a Cantor space, there is a further drop in complexity.
\begin{propo}[See Proposition \ref{non-Archimedean}]
The relation of isomorphism between locally compact or Roelcke precompact non-Archimedean Polish groups is Borel reducible to isomorphism between countable structures.
\end{propo}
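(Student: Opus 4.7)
The plan is to combine Theorem \ref{main} with a $0$-dimensional refinement of the coding. First, when $G$ is non-Archimedean, the compact metrizable space $\rcomp$ produced in the proof of Theorem \ref{main} is totally disconnected. In the locally compact case this uses van Dantzig's theorem: $G$ contains a compact open subgroup $K$, and $G$ is homeomorphic to $K \times (G/K)$ with $K$ a Cantor set (or finite) and $G/K$ countable discrete, so any compactification arising from this data inherits $0$-dimensionality. In the Roelcke precompact case, $G$ admits a neighborhood basis of the identity consisting of open subgroups $V$ with $V \backslash G / V$ finite, so the Roelcke uniformity has a basis of entourages that are equivalence relations with finitely many classes, and the Roelcke compactification is consequently an inverse limit of finite sets, hence a compact, $0$-dimensional, metrizable space. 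Thus the compact metrizable $\mathcal L$-structure $\mathcal M_G$ coding $G$ always lives on a $0$-dimensional underlying space.

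Next, I would prove the purely topological lemma that homeomorphic isomorphism of compact metrizable $\mathcal L$-structures whose underlying space is $0$-dimensional is Borel reducible to isomorphism of countable structures. Given such a structure $\mathcal M$ with underlying space $M$, the Boolean algebra $B(M)$ of clopen subsets of $M$ is countable, and for each $n$-ary symbol $R \in \mathcal L$ the closed relation $R^{\mathcal M} \subseteq M^n$ is completely determined by the set
$$
I_{R^{\mathcal M}} = \{(U_1, \ldots, U_n) \in B(M)^n \mid (U_1 \times \cdots \times U_n) \cap R^{\mathcal M} = \emptyset\},
$$
since the complement of $R^{\mathcal M}$ is open and hence a union of basic clopen boxes. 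By Stone duality, any isomorphism of the countable $\mathcal L'$-structure
$$
\mathcal A_{\mathcal M} \;=\; (B(M);\, 0, 1, \wedge, \vee, \neg,\, (I_{R^{\mathcal M}})_{R\in\mathcal L})
$$
in the natural expanded language $\mathcal L'$ is induced by a unique homeomorphism of the underlying spaces, and this homeomorphism carries each $R^{\mathcal M}$ to the corresponding relation in the target structure exactly when it transports the $I_{R^{\mathcal M}}$'s. So $\mathcal M \mapsto \mathcal A_{\mathcal M}$ is a complete invariant into the standard Borel space of countable $\mathcal L'$-structures.

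The principal technical step, and the likely main obstacle, is executing both parts Borel-uniformly in the code for $G$. Concretely, one has to fix a Borel parametrization of $0$-dimensional compact metric $\mathcal L$-structures (for instance by identifying the underlying space with a closed subspace of $2^{\mathbb N}$ and each relation with a closed subset of the corresponding power), give a Borel selector for an enumeration of the clopen algebra along with its Boolean operations, and then verify that the map $\mathcal M \mapsto \mathcal A_{\mathcal M}$ is Borel. Composing the reduction supplied by Theorem \ref{main} (restricted to non-Archimedean inputs, where the image lands in $0$-dimensional structures by the first paragraph) with this Borel assignment then yields the desired Borel reduction of isomorphism of non-Archimedean locally compact or Roelcke precompact Polish groups to isomorphism of countable structures.
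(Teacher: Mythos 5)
Your proposal follows essentially the same route as the paper: show that the Alexandrov, respectively Roelcke, compactification of a non-Archimedean group is zero-dimensional, then code the resulting zero-dimensional compact metrizable $\mathcal L$-structure by the countable structure on its clopen algebra with the empty-intersection relations $I_{R^{\mathcal M}}$, invoke Stone duality for completeness, and check Borel uniformity of the assignment --- which is exactly the content of the paper's lemma on $\mathfrak C_{\mathcal L}$. The only cosmetic differences are in the zero-dimensionality argument (you use van Dantzig and the finiteness of the double-coset equivalence relations, while the paper uses a left-invariant metric with values in $\{0\}\cup\{1/n\}_{n\in\mathbb N}$ together with Brouwer's theorem) and that you treat all zero-dimensional universes uniformly rather than splitting off the countable/discrete case.
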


Moreover, this result can be applied in a model theoretical context to obtain the following.
\begin{propo}[See Proposition \ref{bi-interpretability}]
The bi-interpretability relation for $ \omega $-categorical structures is classifiable by countable structures.
\end{propo}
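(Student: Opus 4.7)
The plan is to combine the classical theorem of Ahlbrandt and Ziegler with the preceding Proposition \ref{non-Archimedean}. Ahlbrandt--Ziegler asserts that two $\omega$-categorical structures $M$ and $N$ are bi-interpretable if and only if their automorphism groups $\mathrm{Aut}(M)$ and $\mathrm{Aut}(N)$ are topologically isomorphic as Polish groups. Moreover, automorphism groups of $\omega$-categorical structures are precisely the oligomorphic closed subgroups of $S_\infty$, and such groups are automatically non-Archimedean and Roelcke precompact. The idea, therefore, is to realize bi-interpretability as the pullback, along a Borel map, of topological isomorphism on this class of Polish groups, and then invoke Proposition \ref{non-Archimedean}.

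First I would fix a standard Borel parameterization of $\omega$-categorical structures: for each countable relational language $\mathcal L$, take the Polish space of $\mathcal L$-structures on $\mathbb N$ and restrict to its Borel subset of $\omega$-categorical ones (Ryll-Nardzewski makes this condition explicitly Borel, since one need only assert that $\mathrm{Aut}(M)$ has finitely many orbits on $M^n$ for every $n$). Then I would form the disjoint union over all countable $\mathcal L$, obtaining a standard Borel space that carries the bi-interpretability relation.

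Next, I would verify that the assignment $M \mapsto \mathrm{Aut}(M)$ is a Borel map from this parameter space into the Effros Borel space $F(S_\infty)$; this is routine since $\mathrm{Aut}(M)$ is defined by countably many closed conditions on $S_\infty$. By Ahlbrandt--Ziegler, this Borel map is a reduction of bi-interpretability to topological isomorphism on its image, and its image consists of Roelcke precompact, non-Archimedean Polish groups. Composing with the Borel reduction supplied by Proposition \ref{non-Archimedean} yields the desired reduction of bi-interpretability to isomorphism of countable structures.

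The chief substantive step is the invocation of Ahlbrandt--Ziegler, which translates a model-theoretic equivalence into a purely topological one and is what allows the previous machinery to apply; once that bridge is crossed, the argument is essentially formal. The main technical bookkeeping is verifying Borelness of the parameterization (including Borelness of the set of $\omega$-categorical structures, which uses Ryll-Nardzewski) and of the map $M \mapsto \mathrm{Aut}(M)$ into $F(S_\infty)$, together with the standard identification of oligomorphicity with Roelcke precompactness for closed subgroups of $S_\infty$.
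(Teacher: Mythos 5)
Your overall route is the same as the paper's: parameterize countable structures, restrict to the Borel set of $\omega$-categorical ones, use Ahlbrandt--Ziegler to replace bi-interpretability by topological isomorphism of automorphism groups, and then invoke Proposition \ref{non-Archimedean}. The gap is in the step you call routine: Borelness of $M \mapsto \operatorname{Aut}(M)$ into $F(S_\infty)$. That $\operatorname{Aut}(M)$ is cut out by countably many closed conditions only tells you each value is a closed subgroup; Effros-measurability requires that, for each basic open $U_\sigma = \{g \in S_\infty \mid g \supseteq \sigma\}$ ($\sigma$ a finite partial injection), the set $\{M \mid \exists g \in \operatorname{Aut}(M),\ g \supseteq \sigma\}$ be Borel, and this involves an existential quantifier over $S_\infty$. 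On all of $\operatorname{Mod}(\widehat{\mathcal L})$ this condition is genuinely analytic and the stabilizer map is \emph{not} Borel (e.g., rigidity of countable structures is $\Pi^1_1$-complete), so some use of $\omega$-categoricity is indispensable here. The same issue infects your Borelness argument for the class $\mathcal C$ of $\omega$-categorical structures: ``$\operatorname{Aut}(M)$ has finitely many orbits on $M^n$'' again quantifies over automorphisms; one should instead count realized $n$-types via the satisfaction relation, which is what makes $\mathcal C$ visibly Borel.

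Both defects are repairable, and the repairs are exactly where the real content lies. The paper's route: on $\mathcal C$, isomorphism coincides with elementary equivalence, which is smooth (separated by the countably many sets $\operatorname{Mod}(\sigma)$), hence $\cong \upharpoonright \mathcal C$ is Borel; then Theorem 7.1.2 of Becker--Kechris yields that the stabilizer map $M \mapsto \operatorname{Stab}(M) = \operatorname{Aut}(M)$ is Borel on $\mathcal C$. Alternatively, you could argue directly: for $\omega$-categorical $M$, Ryll-Nardzewski identifies $\operatorname{Aut}(M)$-orbits on tuples with realized types, so $\operatorname{Aut}(M) \cap U_\sigma \neq \emptyset$ iff $\sigma$ preserves all first-order formulas, a countable conjunction of Borel conditions. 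Either argument must be supplied; as written, the proposal asserts the conclusion without a valid reason. Finally, a small bookkeeping point: ``the disjoint union over all countable $\mathcal L$'' is not a standard Borel space; fix instead a single universal relational language with infinitely many symbols of each arity, as the paper does.
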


The reader primarily interested in understanding the simple proof of the completeness of homeomorphism of compact metric spaces and the connection with the classification of $C^*$-algebras may restrict the attention to the short Sections \ref{prelim} and \ref{complete}. The only thing needed from Section \ref{Polish groups} to understand Section \ref{complete} is the definition of Roelcke precompactness.


\section{Preliminaries}\label{prelim}
\subsection{Compact metric spaces with closed relations}
Throughout our paper, $ \mathcal{Q} = [0,1]^{\mathbb{N}}$ denotes the Hilbert cube. Recall that, for a Polish space, $ X $, $ K(X) = \{A \subseteq X \mid A \text{ compact} \} $ is the hyperspace (or exponential space) of compact subsets of $ X $ equipped with the {\em Vietoris topology}, that is, the topology whose basic open sets have the form
$$
\{A\in K(X)\;|\; A\subseteq U\;\&\; A\cap V_1\neq \emptyset \;\&\; \ldots\;\&\; A\cap V_n\neq \emptyset\},
$$ 
where $U, V_i$ are open subsets of $X$. Alternatively, the Vietoris topology is that induced by the Hausdorff metric.

Let $ \mathcal{L} $ be a countable, relational language and, for each $R\in \mathcal L$, let $ \alpha(R) $ be a natural number called the {\em arity} of $ R $. A  {\em compact metrizable $ \mathcal{L} $-structure} is a tuple  
$$ 
\mathcal{M} =(M, (R^\mathcal M)_{R\in \mathcal L}),
$$
whose domain, $ M $, is a compact metrizable space and in which, for every symbol $ R \in \mathcal{L} $, the interpretation $ R^{\mathcal{M}} $ is a closed subset of $ M^{\alpha(R)} $.

For a fixed languange $ \mathcal{L} $  and structures $ \mathcal{M} = (M,(R^{\mathcal{M}})_{R \in \mathcal{L}}), \mathcal{N} = (N,(R^{\mathcal{N}})_{R \in \mathcal{L}}) $, we write $ \mathcal{M} \cong_{\mathcal{L}} \mathcal{N} $ when $ \mathcal{M} $ and $ \mathcal{N} $ are \emph{homeomorphically isomorphic}, i.e., when there is a homeomorphism $ f\colon M \to N $ so that for every $ R \in \mathcal{L} $, and all $ x_{1},\dots,x_{\alpha(R)} \in M $,
\[ 
R^{\mathcal{M}}(x_{1},\dots,x_{\alpha(R)}) \iff R^{\mathcal{N}}(f(x_{1}),\dots,f(x_{\alpha(R)})).  
\]
We will simply write $ \mathcal{M} \cong \mathcal{N}$ when $ \mathcal{L} $ is clear from the context.

As every compact metrizable space homeomorphically embeds into $\mathcal Q$, every compact metrizable $\mathcal L$-structure is homeomorphically isomorphic to one whose domain is a closed subset of $\mathcal Q$. We therefore let  $ \mathfrak{K}_{\mathcal{L}} $ be the set of such structures, i.e., 
 \[
\mathfrak{K}_{\mathcal{L}} = \{ (M,(R^{\mathcal{M}})_{R \in \mathcal{L}}) \in K(\mathcal{Q}) \times \prod_{R \in \mathcal{L}} K(\mathcal{Q}^{\alpha(R)}) \mid R^{\mathcal{M}} \subseteq M^{\alpha(R)}  \}.  
 \]
We observe that $\mathfrak K_\mathcal L$ is a closed subset of $K(\mathcal{Q}) \times \prod_{R \in \mathcal{L}} K(\mathcal{Q}^{\alpha(R)})$ and hence a Polish space in its own right.

We shall also consider $\cong_\mathcal L$ as a relation on $\mathfrak K_\mathcal L$ and thus make the following identification,
$$
\cong_\mathcal L=\big\{\big(\mathcal M, \mathcal N\big)\in \mathfrak K_\mathcal L\times \mathfrak K_\mathcal L\;\big|\; \mathcal M\cong \mathcal N\big\}.
$$
It is not hard to see that $\cong_\mathcal L$ is an analytic set, which will also follow from computations below.

Similarly, for a countable relational language $\mathcal L$, we  define the Polish space ${\rm Mod}(\mathcal L)$ of $\mathcal L$-structures with universe $\mathbb N$ by
$$
{\rm Mod}(\mathcal L)=\prod_{R\in \mathcal L}2^{\big(\mathbb N^{\alpha(R)}\big)}.
$$
Thus, a point $x\in {\rm Mod}(\mathcal L)$ can be identified with the structure $(\mathbb N, (x_R)_{R\in \mathcal L})$, where $(a_1, \ldots, a_{\alpha(R)})\in x_R$ if and only if $x(R)(a_1, \ldots, a_{\alpha(R)})=1$.


\subsection{Borel reducibility}
Recall that for equivalence relations $ \mathsf{E} $ and $ \mathsf{F} $ on Polish spaces $ X $ and $ Y $, respectively, $ \mathsf{E} $ is \emph{Borel reducible} to $ \mathsf{F} $ (written $ \mathsf{E} \leqslant_{B} \mathsf{F} $) when there is a Borel-measurable function $ f\colon X \to Y $ so that $ x \mathsf{E} y \iff f(x) \mathsf{F} f(y) $ for all $ x,y \in X $. When $ \mathsf{F} \leqslant_{B} \mathsf{E} \leqslant_{B} \mathsf{F}$, we write $ \mathsf{E} \sim_{B} \mathsf{F} $ (``$ \mathsf{E} $ and $ \mathsf{F} $ are \emph{Borel bi-reducible}''), and when $ \mathsf{F} \nleqslant_{B} \mathsf{E} \leqslant_{B} \mathsf{F}$ we write $ \mathsf{E} <_{B} \mathsf{F} $.

Suppose $G\curvearrowright X$ is a  continuous action of a Polish group on a Polish space $X$. This action gives rise to an orbit equivalence relation, $ \mathsf{E}_{G}^{X} $,  according to which, for any $ x,y \in X $,
\[ x \mathsf{E}_{G}^{X} y \iff \exists g \in G \; g \cdot x=y. \]

As was shown in \cite{becker1996descriptive}, for every Polish group $G$ there is a Polish group action $G\curvearrowright X$ whose orbit equivalence relation, $\mathsf{E}_G=\mathsf{E}_{G}^{X}$, is {\em complete} among all orbit equivalence relations induced by $G$, i.e., so that, for any other continuous action $ G \curvearrowright Y$ on a Polish space $Y$, one has $ \mathsf{E}_{G}^{Y} \leqslant_{B} \mathsf{E}_{G}$. Similarly, if $G$ is a universal Polish group, that is, containing every other Polish group as a closed subgroup, e.g., $G={\rm Homeo}(\mathcal Q)$ or $G={\rm Iso}(\mathbb U)$, then $\mathsf{E}_G$ is complete among all orbit equivalence relations induced by Polish groups. We let 
$\mathsf{E}_{\mathrm{grp}}$ denote some realisation of this complete orbit equivalence relation. 

In other words, if $\mathsf E_G^X$ is any orbit equivalence relation of a continuous action $G\curvearrowright X$  of a Polish group on a Polish space, then $\mathsf E_G^X\leqslant_B \mathsf E_G$, while $\mathsf E_G\leqslant_B\mathsf E_{\rm grp}$. We observe now that, up to Borel bi-reducibility, we may take any $ \cong_{\mathcal{L}} $ to be a representative for $ \mathsf{E}_{\mathrm{grp}} $.

\begin{prop} \label{bireducible to Egrp}
Let $ \mathcal{L} $ be any countable relational language. Then the isomorphism relation $ \cong_{\mathcal{L}}$ between compact metrizable $\mathcal L$-structures is Borel bireducible with $\mathsf{E}_{\mathrm{grp}} $.
\end{prop}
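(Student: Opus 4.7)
The plan is to prove the two Borel reductions separately. The reverse direction $\mathsf{E}_{\mathrm{grp}} \leqslant_B \cong_\mathcal{L}$ is essentially known: Zielinski's theorem from \cite{Zielinski2016635} gives $\mathsf{E}_{\mathrm{grp}} \leqslant_B \cong_\emptyset$, i.e., reducibility to plain homeomorphism of compact metrizable spaces, and for any language $\mathcal{L}$ the map sending a space $X$ to the $\mathcal{L}$-structure $(X,(\emptyset)_{R \in \mathcal{L}})$ in which every relation is interpreted as empty is a continuous reduction $\cong_\emptyset \leqslant_B \cong_\mathcal{L}$.

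The novel direction is $\cong_\mathcal{L} \leqslant_B \mathsf{E}_{\mathrm{grp}}$. I would consider the natural continuous action of the universal Polish group $G = \mathrm{Homeo}(\mathcal{Q})$ on $\mathfrak{K}_\mathcal{L}$ given by
\[
g \cdot (M,(R^\mathcal{M})_{R \in \mathcal{L}}) = (g(M),(g^{\alpha(R)}(R^\mathcal{M}))_{R \in \mathcal{L}}).
\]
If two structures lie in the same $G$-orbit, they are clearly isomorphic via the witnessing $g$. For the converse, a generic isomorphism $f \colon \mathcal{M} \to \mathcal{N}$ is only a homeomorphism between compacta $M, N \subseteq \mathcal{Q}$, and such an $f$ need not extend to an autohomeomorphism of the ambient Hilbert cube. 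To circumvent this I would invoke R.\,D.~Anderson's classical $Z$-set extension theorem: every homeomorphism between two $Z$-sets in $\mathcal{Q}$ extends to an autohomeomorphism of $\mathcal{Q}$. Fixing once and for all a homeomorphism $\mathcal{Q} \cong \mathcal{Q} \times [0,1]$, the resulting embedding $\iota \colon \mathcal{Q} \to \mathcal{Q} \times \{0\} \subseteq \mathcal{Q} \times [0,1] \cong \mathcal{Q}$ has the property that $\iota(M)$ is a $Z$-set for every compact $M \subseteq \mathcal{Q}$. Define a continuous map $\phi \colon \mathfrak{K}_\mathcal{L} \to \mathfrak{K}_\mathcal{L}$ by $\phi(\mathcal{M}) = (\iota(M),(\iota^{\alpha(R)}(R^\mathcal{M}))_{R \in \mathcal{L}})$. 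Since $\iota$ is a homeomorphism onto its image, $\mathcal{M} \cong \phi(\mathcal{M})$, so $\phi$ preserves and reflects $\cong_\mathcal{L}$. Moreover, any isomorphism $\phi(\mathcal{M}) \to \phi(\mathcal{N})$ is a homeomorphism between $Z$-sets and hence extends to some $g \in G$ with $g \cdot \phi(\mathcal{M}) = \phi(\mathcal{N})$. Thus $\phi$ is a Borel reduction from $\cong_\mathcal{L}$ to the orbit equivalence relation of $G \curvearrowright \mathfrak{K}_\mathcal{L}$, which in turn is Borel reducible to $\mathsf{E}_G$ and hence to $\mathsf{E}_{\mathrm{grp}}$ by the universality of $\mathrm{Homeo}(\mathcal{Q})$ among Polish groups.

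The main obstacle is precisely the mismatch between abstract isomorphism of $\mathcal{L}$-structures and ambient conjugation by an element of $G$: the action of $G$ on $\mathfrak{K}_\mathcal{L}$ does not a priori realize every isomorphism. The $Z$-set trick is what reconciles the two; once Anderson's theorem is identified as the right tool, the remaining verifications — continuity of $\phi$, the $Z$-set property of $\iota(M)$, and the preservation and reflection of isomorphism — are routine.
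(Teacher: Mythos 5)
Your proposal is correct and follows essentially the same route as the paper: the reverse direction via Zielinski's theorem plus a trivial expansion of the language (you interpret relations as empty where the paper uses full relations $M^{\alpha(R)}$), and the forward direction by re-embedding structures into a part of $\mathcal{Q}$ where Anderson's extension theorem applies, thereby reducing $\cong_{\mathcal{L}}$ to the orbit equivalence relation of $\operatorname{Homeo}(\mathcal{Q})\curvearrowright\mathfrak{K}_{\mathcal{L}}$ and then to $\mathsf{E}_{\mathrm{grp}}$ by universality. The only (cosmetic) difference is that you use the $Z$-set slice $\mathcal{Q}\times\{0\}$ while the paper embeds into the pseudo-interior $(0,1)^{\mathbb{N}}$; both rest on the same extension phenomenon, since compact subsets of either are $Z$-sets whose homeomorphisms extend to autohomeomorphisms of $\mathcal{Q}$.
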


\begin{proof}
First observe that the relation of homeomorphic isomorphism in the empty language, $ \cong_{\emptyset} $, is simply that of homeomorphism between compact metrizable spaces, and so $ \cong_{\emptyset} \; \sim_{B} \mathsf{E}_{\mathrm{grp}} $ by the main result of \cite{Zielinski2016635}. Furthermore, for any $ \mathcal{L} $, the map $ K(\mathcal{Q}) \to \mathfrak{K}_{\mathcal{L}} $ taking $ M \mapsto (M,(M^{\alpha(R)})_{R \in \mathcal{L}}) $ (the structure, with domain $ M $, for which $ R $ holds on every $ \alpha(R) $-tuple) is clearly a reduction, showing that $ \cong_{\emptyset} \; \leqslant_{B} \; \cong_{\mathcal{L}} $.

It then remains to show that $ \cong_{\mathcal{L}} $ is below a group action. Let $ \iota : \mathcal{Q} \to \mathcal{Q} $ be $ \iota((x_{n})_{n \in \mathbb{N}}) = (\frac{1}{2}x_{n} + \frac{1}{4})_{n \in \mathbb{N}}$. The particulars of this map are unimportant, but rather we note the following two features: First, $ \iota $ is a homeomorphic embedding, and second, its image satisfies $ \iota[\mathcal{Q}] \subseteq (0,1)^{\mathbb{N}} \subseteq \mathcal{Q} $. Thus, by work of R.D. Anderson (see \cite{van2001infinite} Chapter 5), for all compact $ A, B \subseteq \iota[\mathcal{Q}] $, every homeomorphism between $ A $ and $ B $ extends to an element of the homeomorphism group $ \operatorname{Homeo}(\mathcal{Q}) $.

Now, for each $ n \in \mathbb{N} $, $ \iota $ induces a map $ \iota^{n}\colon  \mathcal{Q}^{n} \to \mathcal{Q}^{n} $ by $ \iota^{n}(x_{1},\dots,x_{n}) = (\iota(x_{1}),\dots,\iota(x_{n})) $. Each $ \iota^{n} $, in turn, induces a map $ \iota^{n}_{*}\colon  K(\mathcal{Q}^{n}) \to K(\mathcal{Q}^{n}) $ by $ A \mapsto \iota^{n}[A] $. Finally, let
\[
  \iota_{\mathcal{L}} = \iota^1_{*} \times \prod_{R \in \mathcal{L}} \iota^{\alpha(R)}_{*} \colon \mathfrak{K}_{\mathcal{L}} \to \mathfrak{K}_{\mathcal{L}}. 
\]

Similarly, every $ g \in \operatorname{Homeo}(\mathcal{Q}) $ induces maps $ g^{n}$, $ g^{n}_{*} $, and $ g_{\mathcal{L}} $, the latter of which determines a natural action of $ \operatorname{Homeo}(\mathcal{Q}) $ on $ K(\mathcal{Q}) \times \prod_{R \in \mathcal{L}} K(\mathcal{Q}^{\alpha(R)}) $, for which $ \mathfrak{K}_{\mathcal{L}} $ is an invariant subspace. We claim that $ \iota_{\mathcal{L}} $ is a reduction from $ \cong_{\mathcal{L}} $ to the orbit equivalence relation of this action $ \operatorname{Homeo}(\mathcal{Q}) \curvearrowright \mathfrak{K}_{\mathcal{L}} $. 

Indeed, if $ g \in \operatorname{Homeo}(\mathcal{Q}) $ is such that $ g_{\mathcal{L}}(\iota_{\mathcal{L}}(\mathcal{M})) = \iota_{\mathcal{L}}(\mathcal{N}) $, then $ \iota^{-1} \circ g \circ \iota $ is a homeomorphic isomorphism between $ \mathcal{M} $ and $ \mathcal{N} $. Conversely, if $ h\colon M \to N $ determines a homeomorphic isomorphism $ \mathcal{M} \to \mathcal{N} $, then $ (\iota \circ h \circ \iota^{-1})_{\mathcal{L}} $ is a homeomorphic isomorphism between $ \iota_{\mathcal{L}}(\mathcal{M}) $ and $ \iota_{\mathcal{L}}(\mathcal{N}) $. But as the domain of $ \iota_{\mathcal{L}}(\mathcal{M}) $ and $ \iota_{\mathcal{L}}(\mathcal{M}) $ are $ \iota[M] $ and $ \iota[N] $, and both are subsets of $ \iota[\mathcal{Q}] $, $ \iota \circ h \circ \iota^{-1} $ extends to a $ g \in \operatorname{Homeo}(\mathcal{Q}) $. But $ g_{\mathcal{L}}(\iota_{\mathcal{L}}(\mathcal{M})) = (\iota \circ h \circ \iota^{-1})_{\mathcal{L}}(\iota_{\mathcal{L}}(\mathcal{M})) = \iota_{\mathcal{L}}(\mathcal{N}) $. So $ \iota_{\mathcal{L}}(\mathcal{M}) $ and $ \iota_{\mathcal{L}}(\mathcal{N}) $ are orbit equivalent.
\end{proof}

On other other hand,  the isomorhism relation between countable $\mathcal L$-structures, i.e., between points in ${\rm Mod}(\mathcal L)$, is induced  by the canonical action of $S_\infty$ on ${\rm Mod}(\mathcal L)$ and thus is Borel reducible to $\mathsf E_{S_\infty}$.


\section{Polish groups with canonical compactifications} \label{Polish groups}

In this section, we begin with a family of Polish groups, namely, locally compact or Roelcke precompact groups,  and show how, for an appropriate $ \mathcal{L} $, one may---in a Borel manner---assign compact metrizable structures as complete invariants to these groups.

\begin{thm}\label{main}
The relation of topological isomorphism between locally compact and Roelcke precompact Polish groups is classifiable by compact metrizable structures and is thus Borel reducible to $\mathsf{E}_{\rm grp}$.
\end{thm}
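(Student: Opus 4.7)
The plan is to assign, to each Polish group $G$ in the two classes, a canonical compact metrizable structure $\widehat G$ in a fixed countable relational language $\mathcal L$ so that topological isomorphism of groups corresponds to homeomorphic isomorphism of structures, and to verify that the assignment is Borel. Once this is done, the second assertion follows from Proposition \ref{bireducible to Egrp}.

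For a locally compact Polish group $G$, I would take $\widehat G$ to be the one-point compactification $G^+ = G \cup \{\infty\}$ equipped with two unary predicates marking the identity $e_G$ and the point $\infty$, together with a ternary relation $M^{\widehat G}$ given by the closure in $(G^+)^3$ of the multiplication graph $\{(g, h, gh) \mid g, h \in G\}$. Any topological isomorphism $\varphi\colon G \to H$ extends to a homeomorphism $\varphi^+\colon G^+ \to H^+$ fixing $\infty$, carrying $e_G$ to $e_H$, and, by density of $G^3$ in $(G^+)^3$, mapping $M^{\widehat G}$ onto $M^{\widehat H}$. Conversely, a homeomorphic isomorphism of structures must preserve the two marked points, so it restricts to a homeomorphism $G \to H$ which, on $G^3$, agrees with the multiplication graph and is therefore a topological group isomorphism.

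For a Roelcke precompact Polish group $G$, I would replace $G^+$ with the Roelcke compactification $R(G)$, namely the completion of $G$ under the meet $\mathcal U_L \wedge \mathcal U_R$ of its left and right uniformities, which is compact and metrizable under the standing hypotheses. Equip $R(G)$ with the same coding, using a unary predicate for $e$ and a ternary relation $M^{\widehat G}$ given by the closure of the multiplication graph in $R(G)^3$. Every topological automorphism of $G$ is uniformly continuous for both $\mathcal U_L$ and $\mathcal U_R$, hence for their meet, and therefore extends uniquely to a homeomorphism of $R(G)$ preserving the identity and $M^{\widehat G}$. In the converse direction, the dense subset $G \subseteq R(G)$ can be singled out definably from $M^{\widehat G}$ (for instance, as those $g$ admitting a two-sided $M$-inverse), so a homeomorphic isomorphism of structures restricts to a topological group isomorphism.

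The main obstacle I anticipate is Borel measurability of the assignment $G \mapsto \widehat G$. Working from a standard Borel parameterization of the two classes, say via closed subgroups of ${\rm Iso}(\mathbb U)$, one has to show that $\widehat G$, the distinguished points, and the closure of the multiplication graph can all be realized as Borel functions into $K(\mathcal Q)$ and the appropriate $K(\mathcal Q^n)$. For the locally compact case this reduces to a Borel selection of a compact symmetric neighborhood of $e$ and a Borel construction of $G^+$ inside $\mathcal Q$; for the Roelcke precompact case one makes a Borel choice of a compatible left-invariant metric, forms the pseudometric inducing $\mathcal U_L \wedge \mathcal U_R$, and takes its Borel completion. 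Each of these is a routine operation once the parameterization is fixed, but it is the careful verification at this step where the real labor of the proof will lie.
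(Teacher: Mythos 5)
Your overall scheme (one-point compactification with the closure of the multiplication graph for locally compact $G$, Roelcke completion with the closure of the multiplication graph for Roelcke precompact $G$, then a Borel implementation into $\mathfrak K_{\mathcal L}$) is exactly the paper's, and your locally compact case is essentially complete: there the homeomorphic isomorphism fixes the marked point at infinity, hence carries $G$ onto $H$, and one concludes using $\mathrm{Mult}_G=\overline{\mathrm{Mult}_G}\cap G^3$. But in the Roelcke precompact case your converse direction rests on a claim that is false: you assert that $G$ can be singled out inside its Roelcke compactification $R(G)$ definably from $M^{\widehat G}$, ``for instance, as those $g$ admitting a two-sided $M$-inverse.'' Every point $x\in R(G)$ has this property. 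Indeed, take $g_n\in G$ with $g_n\to x$; by compactness of $R(G)$ we may pass to a subsequence so that $g_n^{-1}\to y$ for some $y\in R(G)$, and then $(g_n,g_n^{-1},e)\in\mathrm{Mult}_G$ and $(g_n^{-1},g_n,e)\in\mathrm{Mult}_G$ give $(x,y,e)$ and $(y,x,e)$ in $\overline{\mathrm{Mult}_G}=M^{\widehat G}$. So your criterion picks out all of $R(G)$, and marking the identity by a predicate does not repair this: a homeomorphic isomorphism of the structures a priori maps $G$ onto some dense set $G'\subseteq M_H$ that need not be $H$. This is precisely the point where the paper has to work: it shows $G'=\psi[G]$ is a dense $G_\delta$, so $H\cap G'$ is comeagre in $M_H$; then, using $\mathrm{Mult}_H=R^{\mathcal M_H}\cap H^3$ and a limit argument along triples from $\mathrm{Mult}_H$ converging to a point of $\mathrm{Mult}_{G'}$, it proves that the operation $(f,h)\mapsto hf^{-1}$ computed in $H$ and in $G'$ agree on $H\cap G'$; hence $H\cap G'$ is a comeagre common subgroup, which can have only one coset, so $H=G'$ and $\psi\restriction G$ is a group isomorphism. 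Some argument of this Baire-category type (or another genuine way of recovering $G$) is needed; without it your proof of the Roelcke precompact case does not go through.

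A secondary point: you correctly identify Borel measurability of $G\mapsto\widehat G$ as the remaining labor but leave it entirely to ``routine operations.'' In the paper this is a substantive portion of the argument: one fixes Kuratowski--Ryll-Nardzewski selectors on the Effros Borel space of closed subgroups of a universal group, checks that $\mathbf{CG}$, $\mathbf{LCG}$, $\mathbf{RPC}$ are Borel, constructs in a Borel way a proper metric on a locally compact $G$ and a compatible metric on $G^*$ (via Mandelkern's metrization), respectively the Roelcke metric $d_\wedge^G$, and then realizes the completion inside $[0,1]^{\mathbb N}$ by the map $x\mapsto(d(x,x_n))_n$ applied to the selector sequence and its products, so that $M_G$ and $R^{\mathcal M_G}$ are Borel functions into the hyperspaces. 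Your sketch is compatible with this, but as written it is a plan rather than a proof.
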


By contract, the complexity of isomorphism between all Polish groups is maximum among all analytic equivalence relations. Indeed, in \cite{ferenczi2009complexity}, it is shown that the complexity of isomorphism between separable Banach spaces is complete analytic. And as two Banach spaces are linearly isomorphic if and only if the underlying additive topological groups are isomorphic, the same holds for isomorphism of Polish groups.

\subsection{The reduction}
We begin by first showing how to assign compact metrizable structures as complete invariants and will subsequently verify that this assignment may be done in a sufficiently constructive manner in order to be Borel.

\subsubsection{Locally compact groups}
The case of locally compact  groups is essentially trivial. First of all, every compact Polish group $G$ is canonically a compact metrizable structure, by viewing $G$ as the tuple $(G, {\rm Mult_G})$, where 
$$
{\rm Mult}_G=\{(g,f,h)\in G^3\mid gf=h\}
$$
is the graph of multiplication in $G$. In this way, isomorphisms of compact groups $G$ and $H$ are simply isomorphisms of the corresponding  compact metrizable structures $(G,{\rm Mult}_G)$ and $(H, {\rm Mult}_H)$.

Now, suppose instead that $G$ is a locally compact non-compact Polish group and let $ M_{G} $ denote the Alexandrov one-point compactification $ G^{*} = G\cup \{*^{\mathcal M_G}\}$. In this case, ${\rm Mult}_G$ is no longer  compact in $M_G^3$, so instead we consider its closure
$$ 
R^{\mathcal{M}_{G}} =\overline{{\rm Mult}_G}^{M_G^{3}}.
$$
Observe that, since ${\rm Mult}_G$ is closed in $G^3$, we  have that ${{\rm Mult}_G}=R^{\mathcal{M}_{G}}\cap G^3$.
It is now easily seen that the structure $\mathcal M_G=(M_G, R^{\mathcal{M}_{G}}, *^{\mathcal{M}_{G}})$ is a complete invariant for $G$, i.e., that two locally compact non-compact Polish groups $G$ and $H$ are isomorphic if and only if their associated structures are homeomorphically isomorphic.

Indeed, suppose $ \varphi: G \to H $ is a topological group isomorphism. Then $ \varphi $ extends to a homeomorphism $ G^{*} \to H^{*} $ mapping $*^{\mathcal M_G}$ to $*^{\mathcal M_H}$ and $R^{\mathcal{M}_{G}}$ to $R^{\mathcal{M}_{H}}$. So also $\mathcal M_G$ and $\mathcal M_H$ are isomorphic. 

Conversely, if $\mathcal M_G$ and $\mathcal M_H$ are isomorphic via some homeomorphism $\varphi\colon M_G\to M_H$, then $\phi$ maps $*^{\mathcal M_G}$ to $*^{\mathcal M_H}$ and thus maps $G=M_G\setminus \{*^{\mathcal M_G}\}$ to $H=M_H\setminus \{*^{\mathcal M_H}\}$. Since also $\varphi$ maps $R^{\mathcal{M}_{G}}$ to $R^{\mathcal{M}_H}$ and ${{\rm Mult}_G}=R^{\mathcal{M}_{G}}\cap G^3$, ${{\rm Mult}_H}=R^{\mathcal{M}_{H}}\cap H^3$, we find that $\varphi$ maps ${\rm Mult}_G$ to ${\rm Mult}_H$ and thus is an isomorphism of topological groups.

As we have restricted ourselves to relational languages, $\mathcal M_G=(M_G, R^{\mathcal{M}_{G}}, *^{\mathcal{M}_{G}})$ is strictly speaking not a compact metrizable structure. But this can easily be repaired simply by replacing the constant $*^{\mathcal{M}_{G}}$ be a unary predicate holding exactly at this single pont.

\subsubsection{Roelcke precompact groups}
Every topological group comes equipped with several natural uniform structures \cite{roelcke1981uniform}, including the left, right, two-sided (or upper, or bilateral), and the Roelcke (or lower) group uniformities. The {\em Roelcke uniformity} is that generated by entourages of the form 
$$ 
\{(g,h) \in G^{2} \mid h \in VgV  \} 
$$ 
as $ V $ varies over symmetric identity neighborhoods. Alternatively, if $d$ is any compatible left-invariant metric on $G$, then 
\[
d_{\wedge}(g,h) = \inf_{f \in G} \max \{d(f,g),d(f^{-1},h^{-1})\} 
\]
is a compatible metric for the Roelcke uniformity. The \emph{Roelcke precompact} groups are precisely those groups whose completions with respect to this latter uniformity are compact. Equivalently, these are the groups, $ G $, for which, given any open identity neighborhood $ V $, there is a finite $ F \subseteq G $ so that $ G = VFV $. Similarly, if $G$ is metrizable, it is Roelcke precompact if and only if $d_\wedge$ is totally bounded on $G$.

Now, for a Roelcke precompact Polish group $G$, let $ M_{G} $ be the completion of $ G $ with respect to the Roelcke uniformity and let, as before, $R^{\mathcal{M}_{G}} =\overline{{\rm Mult}_G}^{M_G^{3}}$ denote the closure in $M_G$ of the graph of the group multiplication. So $M_G$ is a compact metrizable space and $R^{\mathcal M_G}\subseteq M_G^3$ a closed subset. Set $\mathcal M_G=(M_G, R^{\mathcal M_G})$.

Now, if $ \varphi\colon  G \to H $ is a topological group isomorphism, it is then by definition a homeomorphism and moreover induces a bijection between symmetric, open, identity neighborhoods. By the description of the Roelcke uniformity above, it is therefore a bijection between entourages of the Roelcke uniformity, i.e., a uniform homeomorphism.

As such,  if $G$ and $H$ are Roelcke precompact, $\varphi$  extends uniquely  to a homeomorphism $ \varphi\colon  M_{G} \to M_{H} $, and as $ \varphi^{3}\colon M_{G}^{3} \to M_{H}^{3} $ maps the graph of multiplication in $ G $ surjectively to the graph of multiplication in $ H $, it extends to a map between their respective closures, namely it maps $ R^{\mathcal{M}_{G}} $ onto $ R^{\mathcal{M}_{H}} $, and so $ \mathcal{M}_{G} \cong \mathcal{M}_{H} $.

Conversely, suppose $ \psi\colon  \mathcal{M}_{G} \to \mathcal{M}_{H} $ is a homeomorphic isomorphism.  Letting $G' = \psi[G] \subseteq M_{H}$, we see that the set ${\rm Mult}_{G'}=\psi^3[{\rm Mult}_G]$ defines the graph of a group multiplication on $G'$. Also, as  ${\rm Mult}_G=R^{\mathcal{M}_{G}}\cap G^3$, we have
$$
{\rm Mult}_{G'}=\psi^3[{\rm Mult}_G]=R^{\mathcal{M}_{H}} \cap (G')^{3}.
$$
Moreover, as $ \psi $ is a homeomorphism, the topology $ G' $ inherits from $ M_{H} $ is the same as the one induced by $ G $ via $ \psi $, and so makes $ G' $ into a Polish topological group, and isomorphic to $ G $ as such.

As $G$ is a Polish group, it must be $G_\delta$ and dense in $M_{G}$, whence also $G'$ is dense $G_\delta$ in $M_H$. Similarly,  $H$ dense $G_\delta$ in $M_H$. Therefore $H\cap G'$ is comeagre in $M_{H}$, and in both $ G' $ and $ H $ as well.

We claim that, for elements $f,h\in H\cap G'$, the product $hf^{-1}$ is independent of whether it is calculated in $H$ or in $G'$. To see this, let $x\in H$  and $y\in G'$ be the two results of these calculations performed respectively in $H$ and in $G'$, i.e., so that $(x, f,h)\in {\rm Mult}_{H}$, while $(y, f,h)\in {\rm Mult}_{G'}$. It follows that both $(x,f,h)\in {\rm Mult}_H\subseteq R^{\mathcal M_H}$ and $(y, f,h)\in {\rm Mult}_{G'}\subseteq R^{\mathcal M_H}$. 
As $R^{\mathcal M_H}$ is the closure of ${\rm Mult}_H$, there are thus tuples $(k_n, f_n, h_n)\in {\rm Mult}_H$ converging to $(y,f,h)$, i.e., $k_n\to y$, $f_n\to f$ and $h_n\to h$. However, from $(k_n, f_n, h_n)\in {\rm Mult}_H$, we see that $k_n=h_nf_n^{-1}$ as calculated in $H$. So, by the continuity of the group operations in $H$, we have that $k_n=h_nf_n^{-1}\to x$, i.e., $x=y$.

It follows that the intersection $H\cap G'$ is closed under the operation $(f,h)\mapsto hf^{-1}$ and so, being non-empty, is  both a subgroup of $H$ and a subgroup of $G'$. But, since it is a comeagre subgroup, all of its cosets in $H$ and in $G'$ must be comeagre too, which means that there can only be one coset, i.e., $H=H\cap G'=G'$. Therefore, $ \psi[G] = G' = H $, and so $ \psi \upharpoonright G $ is a topological group isomorphism between $ G $ and $ H $.

Summing up, we arrive at the following.
\begin{prop}
For a Roelcke precompact Polish group $G$,  let $M_G$ be the completion of $G$ in the Roelcke uniformity and $R^{\mathcal M_G}= \overline{{\rm Mult}_G}^{M_G^{3}}$ denote the closure in $M_G$ of the graph of the group multiplication on $G$.

Then two Roelcke precompact Polish groups $G$ and $H$ are isomorphic as topolo\-gical groups if and only if the compact metrizable structures $\mathcal M_G=(M_G, R^{\mathcal M_G})$ and $\mathcal M_H=(M_H, R^{\mathcal M_H})$ are homeomorphically isomorphic.
\end{prop}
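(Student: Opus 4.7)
The proposition asserts a two-directional equivalence, so I would prove each implication separately.

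For the ``only if'' direction, given a topological group isomorphism $\varphi\colon G \to H$, the key point is that $\varphi$ respects the Roelcke uniformities on the two groups. This is immediate from the description of Roelcke entourages as $\{(g,h) : h \in VgV\}$ for $V$ ranging over symmetric identity neighborhoods, since $\varphi$ bijects such neighborhoods in $G$ with such neighborhoods in $H$. Consequently, $\varphi$ extends uniquely to a homeomorphism $\widetilde\varphi\colon M_G \to M_H$ of the Roelcke completions, and its coordinatewise extension on triples carries $\mathrm{Mult}_G$ homeomorphically onto $\mathrm{Mult}_H$, hence also the closure $R^{\mathcal{M}_G}$ onto $R^{\mathcal{M}_H}$, yielding $\mathcal{M}_G \cong \mathcal{M}_H$.

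The ``if'' direction is more subtle. Given a homeomorphic isomorphism $\psi\colon \mathcal{M}_G \to \mathcal{M}_H$, I would transport the group structure along $\psi$: set $G' = \psi[G] \subseteq M_H$, equipped with the topology transferred from $G$. Then $G'$ is a Polish topological group isomorphic to $G$, and its graph of multiplication $\mathrm{Mult}_{G'} = \psi^3[\mathrm{Mult}_G]$ satisfies $\mathrm{Mult}_{G'} = R^{\mathcal{M}_H} \cap (G')^{3}$. The task thus reduces to showing $G' = H$, from which $\psi\upharpoonright G$ is the desired topological group isomorphism.

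The crux, and what I expect to be the main obstacle, is establishing $G' = H$. Here I would use a Baire category argument: as $G'$ and $H$ are both Polish, they are dense $G_\delta$ subsets of $M_H$, so $H \cap G'$ is comeager in $M_H$ and in each of $H$ and $G'$. The key verification is that for $f, h \in H \cap G'$, the product $hf^{-1}$ computed in $H$ coincides with the product computed in $G'$. I would exploit $R^{\mathcal{M}_H} = \overline{\mathrm{Mult}_H}^{M_H^3}$: if $y = hf^{-1}$ as computed in $G'$, then $(y, f, h) \in R^{\mathcal{M}_H}$ is the limit of triples $(k_n, f_n, h_n) \in \mathrm{Mult}_H$ with $k_n = h_n f_n^{-1}$ in $H$; continuity of multiplication in $H$ then forces $k_n \to hf^{-1}$ as computed in $H$, so this equals $y$. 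Thus $H \cap G'$ is a nonempty subgroup of both $H$ and $G'$; since it is comeager, each of its cosets is comeager and so there can be only one, giving $H = H \cap G' = G'$.
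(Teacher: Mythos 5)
Your proposal is correct and follows essentially the same route as the paper: the forward direction via uniform continuity with respect to the Roelcke uniformities, and the converse via transporting the group structure to $G'=\psi[G]$, the Baire category argument, and the key verification that $hf^{-1}$ agrees in $H$ and $G'$ by approximating $(y,f,h)\in R^{\mathcal M_H}$ with triples from ${\rm Mult}_H$ and using continuity of the group operations. No gaps to flag.
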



\subsection{Definability of the reduction}
Our goal in this section is now to show that the reductions described above may be made in a Borel manner, i.e., that they correspond to Borel reductions from appropriate standard Borel spaces of Polish groups to the space $\mathfrak K_{\mathcal L}$ of compact metrizable structures over a countable language $\mathcal L$.

\subsubsection{Parametrisations of Polish groups}
Choose a universal Polish group $\mathbb G$ such as $ \operatorname{Iso}(\mathbb{U}) $ or $ \operatorname{Homeo}(\mathcal{Q}) $, i.e., containing every Polish group, up to isomorphism, as a closed subgroup. Fix also a compatible, left-invariant metric $ d $ on $ \mathbb{G} $ of diameter $ 1 $. As is well-known, the family ${\bf GRP}$ of closed subgroups of $\mathbb G$ is a Borel subset of the Effros--Borel space $F(\mathbb G)$ of closed subsets of $\mathbb G$, that is, equipped with the standard Borel structure generated by the sets $\{F\in F(\mathbb G)\mid F\cap U\neq \emptyset\}$, where $U$ varies over open subsets of $\mathbb G$. So ${\bf GRP}$ is a standard Borel space parametrizing  the family of all Polish groups. 

 We note that as the elements of $ \mathbf{GRP} $ inherit the group operation and metric from $ \mathbb{G} $, both metric properties and group-theoretic properties of elements of $ \mathbf{GRP} $ may be described in a uniform, Borel manner. For example, we may show that the classes of compact $\bf {CG}$, locally compact $\bf {LCG}$ and Roelcke precompact $\bf {RPC}$ are Borel subsets of $\bf{GRP}$.
 
For this, we choose a sequence $ (s_{n})_{n \geq 0} $ of  Kuratowski--Ryll-Nardzewski selectors, that is, Borel functions $ s_{n}\colon  F(\mathbb{G}) \to \mathbb{G} $ so that, for non-empty $F\in F(\mathbb G)$, $\{s_n(F)\}$ is a dense subset of $F$  as in \cite{kechris1995classical} Theorem 12.3.  

Then $G\in \bf{GRP}$ is compact if and only if the metric space $(G,d)$ is totally bounded, i.e., exactly when
$$
\forall \epsilon\in \mathbb Q_+\; \exists n\; \forall k\; \exists i\leqslant n \; d(s_k(G), s_i(G))<\epsilon.
$$
Similarly, $G$ is locally compact if and only if 
$$
\exists \delta\in \mathbb Q_+\;\forall \epsilon\in \mathbb Q_+\; \exists n\; \forall k\; \big(d(s_k(G),1)<\delta\Rightarrow \exists i\leqslant n \; d(s_k(G), s_i(G))<\epsilon\big).
$$
Finally, $G$ is Roelcke precompact if and only if the corresponding Roelcke metric 
$$
d_\wedge^G(f,g)=\inf_{h\in G}\max\{d(f,h),d(h^{-1},g^{-1})\}
$$
is totally bounded on $G$, i.e., if 
$$
\forall \epsilon\in \mathbb Q_+\; \exists n\; \forall k\; \exists i\leqslant n \; \exists j\; \big(d(s_k(G), s_j(G))<\epsilon \;\&\; d(s_j(G)^{-1}, s_i(G)^{-1})<\epsilon\big).
$$
This shows that $\bf {CG}$, $\bf {LCG}$ and $\bf{RPC}$ are all three Borel in $\bf {GRP}$.

\subsubsection{Definability  of compatible metrics}
Observe that, both for locally compact non-compact and Roelcke precompact Polish groups, the universe $M_G$ of the associated compact metrizable structure is the completion of $G$ with respect to a compatible metric for the 
Alexandrov, respectively, Roelcke compactification. We now verify that such metrics can be computed in a Borel manner from $G$.

\begin{lem}\label{metric}
For $G\in \bf {LCG}\setminus \bf {CG}$, respectively $G\in \bf{RPC}$, we may choose a compatible metric $d^G_*$ on the Alexandrov compactification, respectively a compatible metric $d^G_\wedge$ on the Roelcke completion, in such a way that, for $n,m,k\in \mathbb N$, the maps
$$
d^G_*(s_n(G), s_k(G)), \quad d^G_*(s_n(G)s_m(G), s_k(G)), \quad d^G_*(*, s_k(G)),
$$
and 
$$
d^G_\wedge(s_n(G), s_k(G)), \quad d^G_\wedge(s_n(G)s_m(G), s_k(G))
$$
are all Borel in the variable $G$.
\end{lem}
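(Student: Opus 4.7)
The plan is to dispatch the Roelcke case by an explicit averaging formula, and to handle the Alexandrov case by embedding $G^*$ Borel-ly into $\mathcal Q$ via compactly supported bump functions on $G$, all controlled by a single Borel-positive ``local compactness radius'' for $G$. For Roelcke precompact $G$, we set
$$d^G_\wedge(f, g) = \inf_{l \in \mathbb N} \max\{d(f, s_l(G)), d(s_l(G)^{-1}, g^{-1})\}.$$
By density of the selectors, this coincides with $\inf_{h \in G} \max\{d(f, h), d(h^{-1}, g^{-1})\}$, so it is a compatible metric for the Roelcke uniformity on $G$, extending uniquely to $M_G$. Substituting $(f, g) = (s_n(G), s_k(G))$ and $(f, g) = (s_n(G) s_m(G), s_k(G))$ displays each required distance as a countable infimum of compositions of the continuous operations on $\mathbb G$ with the Borel selectors, and hence as a Borel function of $G$.

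For locally compact non-compact $G$ we first introduce the scalar
$$r(G) = \sup\big\{r \in \mathbb Q_+ \;\big|\; \{s_k(G) \mid d(1, s_k(G)) < r\} \text{ is totally bounded in } d\big\},$$
which is Borel in $G$ by the same type of formula used to establish that $\bf {LCG}$ is Borel, and strictly positive on $\bf {LCG}$: any compact neighbourhood of $1 \in G$ contains a $d$-ball, and since $G$ is $d$-complete (being closed in $\mathbb G$), total boundedness here entails relative compactness. Left-invariance of $d$ propagates this to all translates by selectors. For each $(n, q) \in \mathbb N \times \mathbb Q_+$ we then define
$$F^G_{n, q}(g) = \begin{cases} \max\{0, 1 - d(s_n(G), g)/q\} & \text{if } q < r(G)/2, \\ 0 & \text{otherwise,} \end{cases}$$
which is continuous on $G$ with compact support, and extend it continuously to $G^*$ by $F^G_{n, q}(*) = 0$. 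A short case analysis on whether $d(g, h)$ is small or large compared to $r(G)$ shows that the countable family $\{F^G_{n, q}\}$ separates points of $G^*$. Enumerating it as $(F^G_j)_{j \in \mathbb N}$, the map $\Phi^G \colon G^* \to \mathcal Q$, $\Phi^G(x) = (F^G_j(x))_j$, is a continuous injection of the compactum $G^*$, and we pull back the standard metric on $\mathcal Q$ by setting
$$d^G_*(x, y) = \sum_{j \in \mathbb N} 2^{-j} \, \vert F^G_j(x) - F^G_j(y) \vert.$$

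Borel measurability of the three distances listed for $d^G_*$ then follows: each value $F^G_j(s_k(G))$ and $F^G_j(s_n(G) s_m(G))$ is a piecewise expression in the Borel scalar $r(G)$ and the continuous operations on $\mathbb G$ applied to Borel selectors, hence Borel in $G$, while $F^G_j(*) = 0$ identically; uniform convergence of the defining series then preserves Borel measurability. The main obstacle we foresee lies in reconciling two competing requirements: compact support of the bump functions (needed both for their continuous extension to $G^*$ and to separate $*$ from points of $G$), and Borel definability uniformly in $G$. Both are mediated by the single Borel-positive scalar $r(G)$, after which the separation argument, the embedding into $\mathcal Q$, and the Borel verification are routine.
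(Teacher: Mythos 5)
Your Roelcke half coincides with the paper's: both take the explicit formula $d^G_\wedge(f,g)=\inf_h\max\{d(f,h),d(h^{-1},g^{-1})\}$ and note that, since $h\mapsto\max\{d(f,h),d(h^{-1},g^{-1})\}$ is continuous and the selectors are dense, the infimum may be computed over $\{s_l(G)\}_l$, making the displayed quantities Borel. For the Alexandrov half you take a genuinely different route. The paper first manufactures a \emph{proper} compatible left-invariant metric $\partial^G$ on $G$ (an infimum of weights over factorizations $g=fx_1\cdots x_n$ with letters in a relatively compact ball $V_G$ or among the selectors) and then applies Mandelkern's explicit metrization of the one-point compactification, $d^G_*(f,g)=\min\{\partial^G(f,g),\ell^G(f)+\ell^G(g)\}$, $d^G_*(*,g)=\ell^G(g)$. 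You instead embed $G^*$ into $\mathcal Q$ by a countable family of compactly supported bump functions centred at selectors, with radii controlled by a Borel ``local compactness radius'' $r(G)$ (playing the role of the paper's $k_G$), and pull back the metric of $\mathcal Q$. This bypasses the construction of a proper metric and Mandelkern's theorem; the price is that you must verify compactness of supports and point separation, and Borelness becomes a series computation rather than reading off a closed formula. Both routes hinge on isolating a single Borel-in-$G$ positive quantity witnessing local compactness, and your scheme for Borelness of the distances at selectors and at products of selectors is sound.

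One justification in your write-up is, however, wrong, though the claim it supports is true and the repair is short. You assert that total boundedness of balls entails relative compactness ``since $G$ is $d$-complete (being closed in $\mathbb G$)''. But $d$ is only a compatible \emph{left-invariant} metric on the universal group $\mathbb G$, and no such metric on $\operatorname{Iso}(\mathbb U)$ or $\operatorname{Homeo}(\mathcal Q)$ is complete (e.g., the left completion of $\operatorname{Iso}(\mathbb U)$ is the semigroup of isometric embeddings of $\mathbb U$); closedness in $\mathbb G$ yields completeness only for metrics that are complete on $\mathbb G$. What saves your argument is that $G$ itself is locally compact, and a locally compact Polish group is complete in its left uniformity: if $(x_i)$ is $d$-Cauchy in $G$, choose $\epsilon$ with $\bar B_\epsilon$ compact in $G$; a tail of the sequence lies in the translate $x_N\bar B_\epsilon$, hence has a convergent subsequence, hence converges. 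With this substituted for the faulty sentence, the closed $q$-balls for $q<r(G)$ are compact, the supports of the $F^G_{n,q}$ are compact, each extends continuously by $0$ to $*$, and the remainder of your proof (separation of points, the embedding $\Phi^G$, and Borel measurability of the three displayed distance functions) goes through as written.
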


\begin{proof}
Consider first $G\in \bf {LCG}\setminus \bf {CG}$. Let $k_G\geqslant 1$ be the minimal integer so that $V_G=\{g\in G\mid d(g,1)< \frac 1{k_G}\}$ is relatively compact in $G$. We define a compatible left-invariant proper metric $\partial^G$ on $G$ by  setting 
$$
\partial^G(f,g)=\inf\big( \sum_{i=1}^n w(x_i)\mid g=fx_1x_2\cdots x_n\;\&\; x_i\in V_G \cup\{s_m(G)\}_m\big),
$$
where $w(x)= d(x,1)$ for $x\in V_G$ and $w(x)=m$, where $m$ is minimal such that $x=s_m(G)$ otherwise.

We may then apply the construction in \cite{mandelkern1989metrization} to obtain a compatible metric $d^G_*$ on the Alexandrov compactification $G\cup \{*\}$. Concretely, let $\ell^G(g)=\frac 1{1+\partial^G(g,1)}$ and set
$$
d^G_*(f,g)=\min\{\partial^G(f,g), \ell^G(f)+\ell^G(g)\}
$$
and $d^G_*(*,g)=\ell^G(g)$. It is now straightforward to check that the appropriate functions have analytic graphs and hence are all Borel measurable.

For $G\in \bf {RPC}$, we immediately see that, e.g., 
$$
d^G_\wedge(s_n(G), s_k(G))=\inf_{m\in \mathbb N}\max \big\{ d(s_n(G), s_m(G)), d(s_m(G)^{-1}, s_k(G)^{-1})\big\},
$$ 
is Borel in the variable $G$.
\end{proof}

\subsubsection{Definability of the mapping}
Recall that, if $(X,d)$ is a metric space of dia\-meter $\leqslant 1$ with a dense sequence $(x_n)_n$, then the map 
$\phi\colon x\mapsto (d(x,x_n))_{n=1}^\infty$ defines a homeomorphic embedding of $X$ into the Hilbert cube $[0,1]^\mathbb N$. Moreover, $\phi$ is uniformly continuous with respect to the unique compatible uniformity on the compact space $[0,1]^\mathbb N$, e.g., induced by the metric $\partial(a,b)=\sum_n\frac {|a_n-b_n|}{2^n}$. It therefore follows that, if $(X,d)$ is totally bounded, then $\phi$ extends continuously to the completion $\overline{(X,d)}$ and thus maps $\overline{(X,d)}$ homeomorphically onto the closure $\overline {\phi[X]}$ inside $[0,1]^\mathbb N$.

Let the language $\mathcal L$ consist of a ternary relation symbol $R$. We must now show how to construct a Borel map $\mathbf {RPC}\to \mathfrak K_{\mathcal L}$ that to each Roelcke precompact closed subgroup of $\mathbb G$ computes a  homeomorphically  isomorphic copy of the  invariant $\mathcal M_G=(M_G, R^{\mathcal M_G})$ defined in the preceeding section. For this, we use the metric $d_\wedge^G$ given by Lemma \ref{metric}. Indeed, for all $n,m$, define Borel functions $\phi_n, \phi_{n,m}\colon {\bf RPC}\to [0,1]^\mathbb N$ by
$$
\phi_n(G)=\big(d^G_\wedge\big(s_n(G), s_k(G)\big)\big)_{k=1}^\infty, \quad 
\phi_{n,m}(G)=\big(d^G_\wedge\big(s_n(G)s_m(G), s_k(G)\big)\big)_{k=1}^\infty.
$$
Then $\phi_n(G)$ is simply the image in $[0,1]^\mathbb N$ of the point $s_n(G)$ under the embedding $\phi$ defined by the dense sequence $(s_k(G))_k$ in $G$. Similarly, $\phi_{n,m}(G)$ is the image of $s_n(G)s_m(G)$.
We set
$$
M_G=\overline{\big\{\phi_n(G)\in [0,1]^{\mathbb N}\;\big|\; n\in \mathbb N\big\}}
$$
and  observe that, since the conditions $M_G\cap U\neq \emptyset$ are Borel in $G$ for all open $U\subseteq [0,1]^\mathbb N$, the map $G\mapsto M_G$ is Borel.  Similarly, setting 
$$
R^{\mathcal M_G}
=\overline{
\big\{\big( \phi_n(G), \phi_m(G), \phi_{n,m}(G)\big)
\in \big( [0,1]^{\mathbb N}\big)^3
\;\big|\; 
n,m\in \mathbb N
\big\}
},
$$
$G\mapsto R^{\mathcal M_G}$ is Borel.

Now, $M_G$ is clearly homeomorphic to the Roelcke completion of $G$, while, as the set of $(s_n(G), s_m(G), s_n(G)s_m(G))$ is dense in ${\rm Mult}_G$, the relation $R^{\mathcal M_G}$ is similarly the closure in $M_G$  of the image of the graph of the multiplication. It follows that $\mathcal M_G=(M_G, R^{\mathcal M_G})\in \mathfrak K_\mathcal L$ is homeomorphically isomorphic to the invariant defined in the preceeding section.

For the case of locally compact non-compact $G$, our language $\mathcal L$ now has an additional symbol $*$ for the point at infinity in the Alexandrov compactification, which we may take to be a unary predicate $P$ holding exactly at this point. The set $M_G$ and the relation $R^{\mathcal M_G}$ are defined as in the Roelcke precompact case, except that we use $d^G_*$ in place of the metric $d^G_\wedge$. Now to see that $G\mapsto P^{\mathcal M_G}$ is Borel, we simply note that
$$
P^{\mathcal M_G}\;\cap\;\;\prod_k\;]\alpha_k,\beta_k[\;\;\neq \emptyset \;\;\;\Leftrightarrow\;\;\; \forall k\;\; \alpha_k<d_*^G(*, s_k(G))<\beta_k
$$
for any $\alpha_k, \beta_k\in \mathbb R$.

These computations conclude the proof of Theorem \ref{main}.

\section{Reducing a complete Polish group action}\label{complete}

In the proof of Proposition \ref{bireducible to Egrp}, we have seen that for every countable language $ \mathcal{L} $, the relation of homeomorphic isomorphism of compact $ \mathcal{L} $-structures is reducible to the orbit equivalence relation induced by a Polish group action. Conversely, by the main result of \cite{Zielinski2016635}, even for $\mathcal L=\emptyset$, the relation of homeomorphic isomorphism is complete for the class of all orbit equivalence relations. The argument there proceeds as follows: first it is established that for a certain $ \mathcal{L} $, homeomorphic isomorphism of compact metrizable $ \mathcal{L} $-structures is a complete orbit equivalence relation. Then it is seen that this relation reduces to that of homeomorphism between compact metrizable spaces without any additional structure.

There, this first step is achieved by representing the affine structure of a compact, convex subset of $ \mathcal{Q} $ as a compact metrizable structure. Thus, the completeness of the homeomorphic isomorphism relation might appear to depend on the completeness of the affine homeomorphism relation of these convex sets, established in \cite{sabok2015completeness}, or on the corresponding result about the isometry of separable complete metric spaces \cite{gao2003classification} from which it follows, in turn.

In this section, we adapt the arguments of Section \ref{Polish groups} to produce a reduction from a complete orbit equivalence relation to homeomorphic isomorphism between compact metrizable structures that obviates the reliance on the completeness of these other relations. And in fact, the completeness of both these relations, as well as others like the isomorphism of separable $C^{*} $-algebras \cite{sabok2015completeness} can now be seen as consequences of this fact for the homeomorphisms of compacta.

For this, recall that the prototypical example of a complete orbit equivalence relation was given in \cite{becker1996descriptive}: it is the orbit equivalence relation of a universal Polish group, $ \mathbb{G} $, acting by left translation on the space of its closed subsets, $ F(\mathbb{G}) $. Let us take $ \mathbb{G} $ to be $ \operatorname{Iso}(\mathbb{U}) $---this choice is not necessary for such a reduction, but it does simplify the language $ \mathcal{L} $ of the resulting structures. Specifically, let $ \mathcal{L} $ be a language with two binary relations and one unary predicate.

\begin{prop} \label{group action to compact structures}
	The orbit equivalence relation, $ \sfE_{\mathrm{grp}} $, of $ \operatorname{Iso}(\mathbb{U}) \curvearrowright F(\operatorname{Iso}(\mathbb{U})) $ is reducible to homeomorphic isomorphism of compact metrizable structures.
\end{prop}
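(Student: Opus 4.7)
The strategy is to adapt Section~\ref{Polish groups}, constructing for each $F \in F(\mathbb{G})$ (where $\mathbb{G} = \operatorname{Iso}(\mathbb{U})$) a compact metrizable $\mathcal{L}$-structure $\mathcal{M}_F$ in a Borel way so that the map $F \mapsto \mathcal{M}_F$ becomes a reduction from $\sfE_{\mathrm{grp}}$ to $\cong_\mathcal{L}$. The new difficulty compared with Section~\ref{Polish groups} is that $\mathbb{G}$ is not Roelcke precompact, hence admits no canonical compactification of itself; in its place I would compactify using the diagonal action of $\mathbb{G}$ on $\mathbb{U}^{\mathbb{N}}$.

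Concretely, I would fix a countable dense sequence $(u_n)$ in $\mathbb{U}$ and a homeomorphic embedding $\iota \colon \mathbb{U} \hookrightarrow \mathcal{Q}$ arising from a bounded compatible metric on $\mathbb{U}$. The map $\Phi \colon \mathbb{G} \to \mathcal{Q}^{\mathbb{N}}$, $g \mapsto (\iota(g u_n))_n$, is then a continuous injection into the compact space $\mathcal{Q}^{\mathbb{N}}$, and the proposed universe is $M_F = \overline{\Phi(F)}$. I would equip $M_F$ with the three relation symbols of $\mathcal{L}$: the unary predicate $P$ marking a distinguished canonical point (for instance, the image under $\Phi$ of a Kuratowski--Ryll-Nardzewski selector from $F$); one binary relation $R_1$ encoding in $M_F^2$ the closure of the left-invariant pairwise Urysohn-distance data between elements of $F$; and a second binary relation $R_2$ encoding the closure of the graph of the ``relative group-element'' operation $(\Phi(g),\Phi(g')) \mapsto \Phi(g^{-1}g')$. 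By construction these three relations are preserved under left-translation of $F$ by elements of $\mathbb{G}$, which gives the forward direction of the reduction.

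The crux is the converse: given an abstract homeomorphic isomorphism $\psi \colon \mathcal{M}_{F_1} \to \mathcal{M}_{F_2}$, one wants to produce $h \in \mathbb{G}$ with $hF_1 = F_2$. The plan here is that $\psi$, restricted to the dense sets $\Phi(F_i) \subseteq M_{F_i}$ and read off through the embedding $\iota$, induces a partial isometry between tuples of points in $\mathbb{U}$; the ultrahomogeneity of $\mathbb{U}$ then supplies a global isometry $h \in \mathbb{G}$ extending this data, and preservation of $R_2$ forces $hF_1 = F_2$. Borel measurability of $F \mapsto \mathcal{M}_F$ is verified exactly as in Section~\ref{Polish groups}, by expressing every ingredient through selectors $s_n \colon F(\mathbb{G}) \to \mathbb{G}$. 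The main obstacle is the precise formulation of $R_1$ and $R_2$ so that they simultaneously (i) are closed in $M_F^2$, including at boundary points of $\overline{\Phi(F)}\setminus\Phi(F)$; (ii) are genuinely left-invariant; and (iii) are rigid enough that an abstract isomorphism of compact structures reconstructs both the metric data in $\mathbb{U}$ and the relative group information, thereby enabling the application of Urysohn's ultrahomogeneity to produce the desired element of $\mathbb{G}$.
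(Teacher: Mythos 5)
The central difficulty your construction does not resolve is that the relations you attach to $F$ must satisfy two competing demands: they must be \emph{invariant} under left translation (otherwise $F\mapsto\mathcal M_F$ is not even well defined on orbits, i.e.\ the forward direction fails) and simultaneously \emph{rigid} enough that an abstract homeomorphic isomorphism is forced to be a left translation (the converse). As formulated, your relations fail the first demand. Your $R_2$, the graph of $(\Phi(g),\Phi(g'))\mapsto\Phi(g^{-1}g')$, is not even a relation on the proposed universe $M_F=\overline{\Phi(F)}$: for an arbitrary closed $F\subseteq\mathbb G$ the element $g^{-1}g'$ generally lies outside $F$ (this is precisely the failure of arbitrary closed sets to be cosets/heaps), and it is ternary rather than binary. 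Worse, even after enlarging the universe to $\overline{\Phi(\mathbb G)}$, this graph is \emph{not} left-translation invariant: an isomorphism of structures acts the same way in every coordinate, carrying $(\Phi(g),\Phi(g'),\Phi(g^{-1}g'))$ to $(\Phi(hg),\Phi(hg'),\Phi(hg^{-1}g'))$, whereas membership in the relation attached to $hF$ requires the third coordinate $\Phi((hg)^{-1}(hg'))=\Phi(g^{-1}g')$. So the homeomorphism induced by $h$ with $hF_1=F_2$ does not map $R_2^{F_1}$ to $R_2^{F_2}$, and your claim that ``these three relations are preserved under left-translation'' is false. The same defect afflicts the unary predicate marking $\Phi(s(F))$ for a Kuratowski--Ryll-Nardzewski selector $s$: selectors do not commute with translations, so this point is not an invariant of the orbit.

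With $R_2$ and the marked point gone, what remains is the left-invariant distance data, and that is genuinely too weak: a bijection between closed subsets of $\mathbb G$ preserving a left-invariant metric need not be a left translation (already for a two-point set $\{1,g\}$ the swap preserves distances but is a translation only when $g^2=1$), so ultrahomogeneity of $\mathbb U$ cannot produce the required $h$ from metric data alone. Note also that an abstract isomorphism $\psi$ need not map $\Phi(F_1)$ into $\Phi(F_2)$; recovering a comeagre set on which such computations are legitimate requires a Baire-category argument (each $\Phi(F_i)$ is dense $G_\delta$, hence comeagre, in $M_{F_i}$), which your sketch omits. The paper resolves the invariance-versus-rigidity tension by a different device: it keeps the whole group as the universe, embedding $\mathbb G=\operatorname{Iso}(\mathbb U)$ into the universal \emph{Roelcke precompact} group $\operatorname{Iso}(\mathbb U_1)$, taking $\overline{\mathbb G}$ inside its Roelcke compactification (to which left translations automatically extend), marking $\overline A$ by the unary predicate, and using as binary relations the closures $\overline{R_a},\overline{R_b}$ of the graphs of \emph{right} multiplication by a topologically generating pair $a,b$. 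Right multiplications commute with left translations, so these are orbit invariants; and preserving them forces $\rho(gw)=\rho(g)w$ for all $w\in\langle a,b\rangle$ and all $g$ in a comeagre set, whence by density and continuity $\rho$ agrees on $\mathbb G$ with left translation by $\rho(1)$, giving $\rho(1)A=B$. If you wish to salvage your scheme, replace $R_2$ and the selector point by such right-multiplication graphs and take the closure of all of $\mathbb G$, not merely of $F$, as the universe, with $\overline{\Phi(F)}$ recorded as a unary predicate.
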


\begin{proof}
	In \cite{solecki2005extending}, it is shown that $ \mathbb{G} = \operatorname{Iso}(\mathbb{U}) $ is \emph{topologically 2-generated}, i.e., there are $ a,b \in \mathbb{G} $ so that $ a $ and $ b $ generate a dense subgroup, $ \langle a,b \rangle $. Let $ R_{a} = \{ (g,ga) \in \mathbb{G}^{2} \mid g \in \mathbb{G} \} $ be the graph of right-multiplication by $ a $, and likewise let $ R_{b} $ be the graph of right multiplication by $ b $. Recall that the isometry group of the Urysohn \emph{sphere}, $ \operatorname{Iso}(\mathbb{U}_{1}) $, is also a universal Polish group, and moreover is Roelcke precompact\cite{uspenskij2008subgroups,rosendal2009topological}. Fix a topological group embedding $ \mathbb{G} \hookrightarrow \operatorname{Iso}(\mathbb{U}_{1}) $, and let $ X = \overline{\operatorname{Iso}(\mathbb{U}_{1})}^{\wedge} $ be the Roelcke compactification. Identify $ \mathbb{G} $ with its embedded copy in $ X $. Then the map $ F(\mathbb{G}) \ni A \mapsto (\overline{\mathbb{G}},\overline{R_{a}}, \overline{R_{b}}, \overline{A}) $ is a reduction, where the closures are taken in $ X $ and $ X^{2} $.
	
	For suppose $ A, B \in F(\mathbb{G}) $ with $ A \: \sfE_{\mathrm{grp}} \: B $. Then there is an $ f \in \mathbb{G} $ so that $ fA = B $. The left multiplication action of $ \operatorname{Iso}(\mathbb{U}_{1}) $ on itself extends to an action $ \operatorname{Iso}(\mathbb{U}_{1}) \curvearrowright X $, and so viewing $ f $ as an element of $ \operatorname{Iso}(\mathbb{U}_{1}) $, $ f $ extends to a homeomorphism $ \lambda_{f}: X \to X $. As $ \lambda_{f}[\mathbb{G}] = \mathbb{G} $, its closure is also fixed set-wise by $ \lambda_{f} $. Likewise, as $ fA = B $, $ \lambda_{f}[\overline{A}] = \overline{B} $. Moreover,
	\[  (\lambda_{f} \times \lambda_{f})[R_{a}] = \{(fg,fga) \in \mathbb{G}^{2} \mid g \in \mathbb{G} \} = \{(g,ga) \in \mathbb{G}^{2} \mid f^{-1} g \in \mathbb{G} \} = R_{a}  \]
	Therefore also $ (\lambda_{f} \times \lambda_{f})[\overline{R_{a}}] = \overline{R_{a}} $ and the same holds, mutatis mutandis, for $ \overline{R_{b}} $. Therefore, $ \lambda_{f} \upharpoonright \overline{\mathbb{G}} $ is a homeomorphic isomorphism $ (\overline{\mathbb{G}},\overline{R_{a}}, \overline{R_{b}}, \overline{A}) \to (\overline{\mathbb{G}},\overline{R_{a}}, \overline{R_{b}}, \overline{B}) $.
	
	On the other hand, suppose $ \rho $ is a homeomorphic isomorphism $ (\overline{\mathbb{G}},\overline{R_{a}}, \overline{R_{b}}, \overline{A}) \to (\overline{\mathbb{G}},\overline{R_{a}}, \overline{R_{b}}, \overline{B}) $. For a binary relation, $ R $, let $ R^{-1} = \{(x,y) \mid (y,x) \in R \} $. Then
	\[ R_{a^{-1}} = \{(g,ga^{-1}) \in \mathbb{G}^{2} \mid g \in \mathbb{G} \} = \{(ga,g) \in \mathbb{G}^{2} \mid g \in \mathbb{G} \} = R_{a}^{-1}, \]
	and so $ \overline{R_{a^{-1}}} = \overline{R_{a}^{-1}} = \big(\overline{R_{a}}\big)^{-1} $. Thus $ (\rho \times \rho)[\overline{R_{a^{-1}}}] = \overline{R_{a^{-1}}} $, and likewise for $ b^{-1} $. Also, note that for any $ g \in \mathbb{G} $, $ \mathbb{G}^{2} \cap \overline{R_{g}} = R_{g} $, as $ R_{g} $ is closed in $ \mathbb{G}^{2} $. Let $ C' = \rho^{-1}[\mathbb{G}] \cap \mathbb{G} $. As $ \mathbb{G} $ is comeagre in its closure, $ C' $ is comeagre in $ \mathbb{G} $, and therefore so is $ C = \bigcap_{w \in \langle a,b \rangle} C'w $. Observe that $ Cw = C $ for all $ w \in \langle a,b \rangle $.
	
	Suppose $ g \in C $ and $ s \in \{a,b,a^{-1},b^{-1}\} $. Then $ (\rho(g),\rho(gs)) = (\rho \times \rho)(g,gs) \in (\rho \times \rho)[\overline{R_{s}}] = \overline{R_{s}} $. Moreover,
	\[ \rho(g) \in \rho[C] \subseteq  \mathbb{G} \]
	and
	\[ \rho(gs) \in \rho[Cs] = \rho[C] \subseteq  \mathbb{G}. \]
	So for all $ g \in C $ and $ s \in \{a,b,a^{-1},b^{-1}\} $, $ (\rho(g),\rho(gs)) \in \overline{R_{s}} \cap \mathbb{G}^{2} = R_{s} $, and so $ \rho(gs) = \rho(g)s $. As $ Cw = C $ for all $ w \in \langle a,b \rangle $, by induction on length, $ \rho(gw) = \rho(g)w $ for all such $ w $.
	
	Now fix $ g \in C $, and let $ h \in \mathbb{G} $ be arbitrary. Take $ (w_{n})_{n \in \mathbb{N}} \subseteq \langle a,b \rangle $ with $ w_{n} \to h $. Then
	\[ \rho(gh) = \lim_{n} \rho(gw_{n}) = \lim_{n} \rho(g)w_{n} = \rho(g)h \]
	where the first and third equalities are consequences of the continuity of $ \rho $ and of multiplication in $ \mathbb{G} $.
	
	Pick any $ g \in C $. Then as $ \rho(g) \in \mathbb{G} $, and by the above, $  \rho(1_{\mathbb{G}}) = \rho(gg^{-1}) = \rho(g)g^{-1} $. So $ \rho(1_{\mathbb{G}}) $ is a product of two elements of $ \mathbb{G} $, and so in $ \mathbb{G} $ itself. Next fix $ h \in \mathbb{G} $, and let $ (g_{n})_{n \in \mathbb{N}} $ be a sequence from $ C $ converging to $ 1_{\mathbb{G}} $. Then as before,
	\[ \rho(h) = \rho(1_{\mathbb{G}}h) = \lim_{n} \rho(g_{n}h) = \lim_{n} \rho(g_{n})h = \rho(1_{\mathbb{G}})h. \]
	
	Therefore, on $ \mathbb{G} $, $ \rho $ agrees with the left translation by $ \rho(1) $, and so it must be the unique extension of this map. In particular, $ \rho[\mathbb{G}] = \mathbb{G} $, and so $ \rho(1)A = \rho[A] = \rho[\overline{A} \cap \mathbb{G}] = \overline{B} \cap \mathbb{G} = B $. So $ A \: \mathsf{E}_{\mathrm{grp}} \: B $.
\end{proof}
	
\section{Polish heaps}
As we are interested in translating subsets of Polish groups, it is useful to investigate the concept of abstract cosets (see \cite{baer1929einfuhrung,certaine1943ternary} for some early papers on the subject), that is, structures in which left and right-translations become actual automorphisms.
\begin{defn}
	A {\em heap} (a.k.a. {\em groud} or {\em abstract coset}) is a pair $(H,[\;])$ consisting of a non-empty set $H$ and a ternary operation $[\;]\colon H^3\to H$ satisfying, for all $a,b,c,d,e\in H$,
	$$
	[[a,b,c],d,e]=[a,b,[c,d,e]] \quad\text{(para-associativity)}
	$$
	and
	$$
	b=[a,a,b]=[b,a,a]\quad\text{(identity law)}.
	$$
\end{defn}

Heaps are best understood as the remaining structure of a group when we have suppressed the knowledge of the identity. Concretely, there is a correspondence between heaps with a distinguished element and groups given as follows.

If $(H,\cdot)$ is a group, then $[x,y,z]=xy\inv z$ produces a heap operation on $H$.
Conversely, if $(H,[\;])$ is a heap and $e\in H$ is fixed, then $x\cdot y=[x,e,y]$ defines a group operation on $H$, with respect to which $e$ is the identity and the inverse is given by $x\inv=[e,x,e]$. Moreover, these constructions are inverses of each other.

As is easy to see, the subheaps of $(H,[\;])$, that is  non-empty subsets closed under the heap operation, are simply left or, equivalently, right-cosets of subgroups of the group $(H, \cdot)$.

Note that, if $H$ is a heap and $\cdot$ and $*$ are the two group operations $x\cdot y=[x,e,y]$ and $x*y=[x,a,y]$ corresponding to the choices $e$ and $a$ of identity element and $a\inv=[e,a,e]$ denotes the inverse of $a$ in $(H,\cdot)$, then the para-associativity and identity laws give us
\[\begin{split}
x*y
&=[x,a,y]=[x,a,[e,e,y]]=[[x,a,e],e,y]= [[[x,e,e],a,e],e,y]\\
&= [[x,e,[e,a,e]],e,y]=x\cdot a\inv \cdot y.
\end{split}\]
It follows from this that the mappings
$$
\lambda_{a\cdot}\colon x\mapsto a\cdot x\quad\text{ and }\quad \rho_{\cdot a}\colon x\mapsto x\cdot a
$$
are both isomorphisms between the groups $(H, \cdot)$ and $(H, *)$.

With this, we can also describe the isomorphisms between two heaps $(H,[\;])$ and $(G,[\;])$. Indeed, letting $(H,\cdot)$ and $(G,\star)$ be the groups corresponding to some choices of identity, every isomorphism $\alpha\colon (H,[\;])\to (G,[\;])$ can be written as
$$
\alpha=\lambda_{a\star}\circ \beta=\rho_{\star b}\circ \delta= \gamma\circ \lambda_{c\cdot}= \sigma\circ \rho_{\cdot d}
$$
for some $a,b\in G$, $c,d\in H$ and group isomorphisms $\beta,\gamma, \delta, \sigma\colon (H,\cdot)\to (G,\star)$.

A {\em topological heap} is simply a heap $(H,[\;])$ in which $H$ is a topological space and the heap operation is continuous. Observe that, in this case, for any choice of $e\in H$, the group operation and the inverse operation defined above are both continuous and so $(H,\cdot)$ is a topological group. Conversely, a topological group gives rise to a topological heap.

Another way of looking at heaps is by considering the graph $\mathcal G_{[\;]}$ of the heap operation. That is, if $(H,[\;])$ is a heap and we consider the group operations corresponding to some choice of identity element $e\in H$, then
\[\begin{split}
\mathcal G_{[\;]}
&=\{(x,y,z,u)\mid [x,y,z]=u\}\\
&=\{(g,ga,gb,ga^{-1}b)\mid g,a,b\in H\}\\
&=\{(g,ag,bg,a^{-1}bg)\mid g,a,b\in H\}.
\end{split}\]
For example, to see the equality of the last two sets, note that, for $g,a,b\in H$,
$$
(g,ga,gb,ga^{-1}b)=(g, (gag\inv)g, (gbg\inv)g, (gag\inv)^{-1}(gbg\inv)g).
$$

\begin{lem}
	Suppose  $G$ and $H$ are Polish heaps homeomorphically and densely embedded in Polish spaces $X$ and $Y$ respectively and that $\ov {\mathcal G_{[\;]_G}}$ and $\ov{\mathcal G_{[\;]_H}}$ are the closures of the graphs of the heap operations inside $X^4$ and $Y^4$ respectively. Assume that 
	$$
	\phi\colon (X, \ov {\mathcal G_{[\;]_G}})\to (Y, \ov {\mathcal G_{[\;]_H}})
	$$ 
	is a homeomorphic isomorphism. Then $\phi$ maps $G$ onto $H$ and thus restricts to an isomorphism of topological heaps.
\end{lem}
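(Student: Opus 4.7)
The plan is to directly adapt the Baire-category argument used in Section \ref{Polish groups} for the Roelcke precompact case. I would begin by setting $G' = \phi[G] \subseteq Y$. Since $G$ is a Polish subspace densely embedded in $X$, Alexandrov's theorem makes $G$ a $G_\delta$ subset of $X$, so $G'$ is dense $G_\delta$ in $Y$; the same holds for $H$. Their intersection $G' \cap H$ is therefore dense $G_\delta$ in $Y$, and consequently comeagre in each of the Polish subspaces $G'$ and $H$. Transporting the heap structure from $G$ through $\phi$, the set $G'$ inherits a Polish heap whose graph $\mathcal{G}_{[\;]_{G'}} = \phi^4[\mathcal{G}_{[\;]_G}]$ is contained in $\ov{\mathcal{G}_{[\;]_H}}$, because $\phi^4$ maps $\ov{\mathcal{G}_{[\;]_G}}$ onto $\ov{\mathcal{G}_{[\;]_H}}$ by hypothesis.

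The crux is to verify that the two heap operations agree on the overlap. Given $a, b, c \in G' \cap H$, set $u = [a,b,c]_{G'}$. Then $(a,b,c,u) \in \mathcal{G}_{[\;]_{G'}} \subseteq \ov{\mathcal{G}_{[\;]_H}}$, so there is a sequence $(a_n, b_n, c_n, u_n)$ in $\mathcal{G}_{[\;]_H}$ converging to $(a, b, c, u)$. Since $u_n = [a_n, b_n, c_n]_H$ and $a, b, c$ already lie in $H$, continuity of the heap operation on $H$ forces $u_n \to [a,b,c]_H$, whence $u = [a,b,c]_H \in H$. In particular, $G' \cap H$ is closed under this common operation and is a subheap of both $G'$ and $H$.

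To finish, I would fix any $e \in G' \cap H$ (nonempty by comeagerness) and invoke the heap--group correspondence from earlier in the section: both $G'$ and $H$ become Polish topological groups under $x \cdot y = [x,e,y]$ sharing $e$ as identity, and $G' \cap H$ becomes a common subgroup. The classical fact that a comeagre Baire-measurable subgroup of a Polish group coincides with the whole group (for any ambient $g$, the coset $g \cdot (G' \cap H)$ is comeagre and hence meets $G' \cap H$) forces $G' \cap H = G' = H$. Therefore $\phi[G] = H$. Since $\mathcal{G}_{[\;]_G}$ is closed in $G^4$ as the graph of a continuous operation, $\mathcal{G}_{[\;]_G} = \ov{\mathcal{G}_{[\;]_G}} \cap G^4$, and applying $\phi^4$ yields $\phi^4[\mathcal{G}_{[\;]_G}] = \mathcal{G}_{[\;]_H}$, so $\phi|_G$ is an isomorphism of topological heaps. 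I expect the main obstacle to be the coincidence-of-operations step in the second paragraph, which is what forces the two limiting graphs to pin down a unique operation on $G' \cap H$; once that is in place, the subgroup-comeagerness finish is a routine Polish-group argument exactly paralleling the one used in Section \ref{Polish groups}.
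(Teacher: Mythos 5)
Your proposal is correct, and it proves the lemma by a somewhat different arrangement than the paper: you transplant the argument the paper uses for Roelcke precompact \emph{groups} in Section \ref{Polish groups} (push the structure forward, compare the two operations on the overlap, finish with a comeagre-subgroup argument), whereas the paper's own proof of this lemma stays on the $X$-side. Concretely, the paper fixes $e\in G\cap\phi\inv(H)$ at the outset, notes $\ov{\mathcal G_{[\;]_H}}\cap H^4=\mathcal G_{[\;]_H}$ to get $\phi([x,y,z])=[\phi(x),\phi(y),\phi(z)]$ whenever all four points lie in $\phi\inv(H)$, and then—since $[e,a,b]=a\inv b$ need not lie in $\phi\inv(H)$ a priori—runs an extra approximation: it chooses $g_n\to e$ in the comeagre set of $g$ with $g,ga,gb,ga\inv b\in\phi\inv(H)$ (using that right translations are homeomorphisms) and passes to the limit to obtain $\phi(a\inv b)=\phi(a)\inv\phi(b)$ on $G\cap\phi\inv(H)$; this shows that comeagre set is closed under $(a,b)\mapsto a\inv b$, hence equals $G$, so $\phi[G]$ is a comeagre subgroup of $H$ and therefore all of $H$. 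Your coincidence-of-operations step replaces that translation trick: by approximating $(a,b,c,u)\in\ov{\mathcal G_{[\;]_H}}$ with quadruples from $\mathcal G_{[\;]_H}$ and using continuity of $[\;]_H$ together with uniqueness of limits, you get $u=[a,b,c]_H$ without needing the fourth coordinate to lie in $H$ in advance, so comeagreness is only invoked for nonemptiness of $G'\cap H$ and for the final one-coset argument. This makes your argument slightly leaner; the paper's route, in return, exhibits $\phi\restriction G$ explicitly as a group homomorphism for the chosen identities. One small detail to make explicit in your last step: to pass from $\phi^4[\mathcal G_{[\;]_G}]=\ov{\mathcal G_{[\;]_H}}\cap H^4$ to $\mathcal G_{[\;]_H}$ you also need $\ov{\mathcal G_{[\;]_H}}\cap H^4=\mathcal G_{[\;]_H}$, but this follows at once from the same sequential argument (or simply from continuity of $[\;]_H$ making its graph closed in $H^4$), so it is a fill-in, not a gap.
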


\begin{proof}
Observe first that, being Polish, $G$ and $H$ are dense $G_\delta$ and thus comeagre subsets of $X$ and $Y$ respectively. It follows that also $\phi\inv(H)$ is comeagre in $X$ and thus that $\phi[G]\cap H\neq\emptyset$. Note also that, as the heap operations are continuous, their graphs are closed subsets of $G^4$ and $H^4$ respectively, whence $\ov{\mathcal G_{[\;]_G}}\cap G^4=\mathcal G_{[\;]_G}$ and $\ov{\mathcal G_{[\;]_H}}\cap H^4=\mathcal G_{[\;]_H}$.

So fix $e\in G\cap \phi\inv (H)$. We will view $G$ and $H$ as Polish topological groups with $e$ and $\phi(e)$ as identities, that is, so that the group operations are given by $a\cdot_Gb=[a,e,b]_G$ and $x\cdot_H y=[x,\phi(e),y]_H$ respectively. 
	
Observe first that, for all $x,y,z\in G$ with $x,y,z, [x,y,z]\in \phi\inv (H)$, we have $(x,y,z,[x,y,z])\in {\mathcal G_{[\;]_G}}$ and thus $(\phi(x),\phi(y),\phi(z),\phi([x,y,z]))\in \ov{\mathcal G_{[\;]_H}}\cap H^4=\mathcal G_{[\;]_H}$, whence 
$$
\phi([x,y,z])=[\phi(x),\phi(y),\phi(z)].
$$

Observe now that, for $a,b\in G$, the set of $g\in G$ so that $g,ga, gb, [g,a,b]=ga\inv b\in \phi\inv (H)$ is comeagre, as $G\cap \phi\inv (H)$ is comeagre in $G$ and right translation by $a$, $b$ and $a\inv b$ are homeomorphisms of $G$. Therefore, given $a,b\in G\cap \phi\inv(H)$, we can find a sequence $g_n\in G$ converging to $e$ so that $g_n,g_na, g_nb, [g_n,a,b]\in \phi\inv (H)$ for all $n$. It thus follows by continuity of the heap operations in $G$ and $H$ and by continuity of $\phi$ that
\[\begin{split}
\phi(a\inv b)
&=\phi([e,a,b])\\
&=\lim \phi([g_n,a, b])\\
&=\lim [\phi(g_n), \phi(a),\phi(b)]\\
&=[\phi(e), \phi(a), \phi(b)]\\
&=\phi(a)\inv \phi(b).
\end{split}\]
	In other words, if $a,b\in G\cap \phi\inv(H)$, then $\phi(a\inv b)=\phi(a)\inv \phi(b)\in H$ and so $a\inv b\in G\cap \phi\inv (H)$. In particular, as $G\cap \phi\inv(H)$ is comeagre in $G$, 
	$$
	G= \big(G\cap \phi\inv(H)\big)\inv \cdot \big(G\cap \phi\inv(H)\big)\subseteq G\cap \phi\inv (H),
	$$ 
	so $\phi(G)\subseteq H$ and $\phi\colon G\til H$ is a homomorphism. It follows that $\phi[G]$ is a comeagre subgroup of $H$, which thus can have only one coset in $H$, i.e., $\phi[G]=H$. 
	
	Again, as $\phi$ maps $\ov{\mathcal G_{[\;]_G}}\cap G^4={\mathcal G_{[\;]_G}}$ to $ \ov{\mathcal G_{[\;]_H}}\cap H^4={\mathcal G_{[\;]_H}}$, we see that it is an isomorphism of the topological heaps $(G,[\;]_G)$ and $(H,[\;]_H)$.
\end{proof}

Now, if $G$ is a Polish heap and $\cdot$ and $*$ are the group operations corresponding to different choices $e$ and $a$ of identity, then $(G,\cdot)$ and $(G,*)$ are isomorphic as topological groups.  Moreover, an isomorphism is given by the map $\lambda_{a\cdot}\colon x\mapsto a\cdot x$.

Now, if we equip each of $(G,\cdot)$ and $(G,*)$ with their Roelcke uniformities, then, since left-multiplication  by the $\cdot$ inverse $a\inv$ of $a$ is a uniform homeomorphism of $(G,\cdot)$, we see that
$$
(G,\cdot)\overset{\lambda_{a\inv \cdot}}\longrightarrow (G,\cdot )\overset{\lambda_{a\cdot}}\longrightarrow (G,*)
$$
is a series of uniform homeomorphisms composing to the identity ${\rm id}_G$ on $G$. In other words, the Roelcke uniformity on $G$ (and not just its uniform homeomorphism type) is independent of the choice of group identity. We call this the Roelcke uniformity of the heap $G$. In particular, $G$ is said to be {\em Roelcke precompact} if some or equivalently all induced groups are Roelcke precompact. 

Now, if $G$ is a Roelcke precompact Polish heap, we let $\ov G$ denote its Roelcke completion. Also, let $\ov{\mathcal G_{[\;]_G}}$ denote the closure of the graph of the heap operation inside $\ov G^4$.

\begin{prop}
Let $G$ and $H$ be Roelcke precompact Polish heaps. Then $G$ and $H$ are isomorphic if and only if the compact metrizable structures $(\ov G,\ov{\mathcal G_{[\;]_G}})$ and $(\ov H, \ov{\mathcal G_{[\;]_H}})$ are homeomorphically isomorphic.
	
Moreover, every homeomorphic isomorphism between $(\ov G,\ov{\mathcal G_{[\;]_G}})$ and $(\ov H, \ov{\mathcal G_{[\;]_H}})$  extends an isomorphism of the heaps $G$ and $H$.
\end{prop}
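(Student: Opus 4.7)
The proposition has two directions. The reverse implication, together with the ``moreover'' clause, is immediate from the preceding lemma: if $\phi\colon(\ov G,\ov{\mathcal G_{[\;]_G}})\to(\ov H,\ov{\mathcal G_{[\;]_H}})$ is any homeomorphic isomorphism, then taking $X=\ov G$ and $Y=\ov H$ (with $G$ and $H$ densely embedded as subsets of their respective Roelcke completions), the lemma produces from $\phi$ an isomorphism of topological heaps $\phi\upharpoonright G\colon G\til H$. In particular, the existence of such a $\phi$ implies $G\cong H$, and every such $\phi$ restricts to a heap isomorphism as required.

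For the forward implication, I would start with an isomorphism of topological heaps $\phi\colon G\til H$ and upgrade it to a homeomorphic isomorphism of the two compact metrizable structures. Fix any $e\in G$, and view $G$ as a topological group $(G,\cdot)$ with identity $e$ (so $x\cdot y=[x,e,y]_G$); view $H$ as a topological group $(H,\star)$ with identity $\phi(e)$. Since $\phi$ preserves the heap operation and $\phi(e)$ is the identity of $(H,\star)$, the map $\phi$ is a topological group isomorphism $(G,\cdot)\til(H,\star)$. As noted in the paragraph preceding the lemma, the Roelcke uniformity of a heap does not depend on the choice of identifying element, so the Roelcke uniformities induced on $G$ and $H$ by the group structures $(G,\cdot)$ and $(H,\star)$ coincide with the Roelcke uniformities of the underlying heaps. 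Since every topological group isomorphism is automatically a homeomorphism of Roelcke uniform structures, $\phi$ is uniformly continuous as a map between Roelcke uniform spaces and so extends uniquely to a homeomorphism $\ov\phi\colon\ov G\til\ov H$ between the Roelcke completions.

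It remains to check that $\ov\phi$ takes $\ov{\mathcal G_{[\;]_G}}$ onto $\ov{\mathcal G_{[\;]_H}}$, i.e., that the product homeomorphism $\ov\phi{}^{\,4}\colon\ov G{}^{4}\til\ov H{}^{4}$ carries the one closed set to the other. Because $\phi$ is a heap isomorphism, $\phi^4$ sends $\mathcal G_{[\;]_G}$ bijectively to $\mathcal G_{[\;]_H}$; because $\ov\phi{}^{\,4}$ is a homeomorphism of $\ov G{}^{4}$ onto $\ov H{}^{4}$ extending $\phi^4$, it carries the closure of $\mathcal G_{[\;]_G}$ in $\ov G{}^{4}$ onto the closure of $\mathcal G_{[\;]_H}$ in $\ov H{}^{4}$, which is precisely the desired conclusion.

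The only conceptual obstacle is the extension of $\phi$ to the Roelcke completions, and this is handled by the preceding observation that the heap's Roelcke uniformity is canonical (independent of the choice of identity); once this is granted, the extension is automatic and the verification on graphs is a routine density-and-continuity argument. Both steps mirror the corresponding arguments used earlier for Roelcke precompact Polish groups.
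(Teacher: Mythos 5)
Your proof is correct and follows essentially the same route as the paper: the backward implication and the ``moreover'' clause are obtained from the preceding lemma applied with $X=\ov G$, $Y=\ov H$, and the forward implication comes from observing that a heap isomorphism is a topological group isomorphism for the groups based at $e$ and $\phi(e)$, hence a uniform homeomorphism for the (canonically defined) Roelcke uniformities, and so extends to the completions carrying $\ov{\mathcal G_{[\;]_G}}$ onto $\ov{\mathcal G_{[\;]_H}}$. The paper compresses this forward direction into one sentence, but your expanded version is exactly the intended argument.
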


\begin{proof}
By the preceding lemma, it suffices to note that every topological isomorphism between $G$ and $H$ extends to a homeomorphic isomorphism of  $(\ov G,\ov{\mathcal G_{[\;]_G}})$ and $(\ov H, \ov{\mathcal G_{[\;]_H}})$. But this is trivial since an isomorphism of $G$ and $H$ will be a uniform homeomorphism with respect to the Roelcke uniformities.
\end{proof}




\section{Non-Archimedean groups}
Our next step is to consider  the non-Archimedean versions of the above classes of groups, i.e., those possessing a neighborhood basis at the identity consisting of open subgroups. As it turns out, when restricting to uncountable groups, the Alexandrov, respectively, the Roelcke compactifications will be homeomorphic to Cantor space $2^\mathbb N$, whence we obtain a better upper bound for the complexity of isomorphism.

First, it is trivial to see that the Alexandrov compactification of an uncountable non-Archimedean locally compact Polish group is perfect and zero-dimensional, whence, by a theorem of L. E. J. Brouwer,  homeomorphic to Cantor space. Similarly, if $G$ is a non-Archimedean Polish group, then $G$ has a compatible left-invariant metric $d$ taking values only in $\{0\}\cup \{\frac 1n\}_{n\in\mathbb N}$, whereby the Roelcke metric $d_\wedge$ also takes values in the same set. As $\{\frac 1n\}_{n\in \mathbb N}$ is discrete,  it follows that the Roelcke completion of $G$ has a compatible metric whose positive values lie in the same set, whereby the completion must be zero-dimensional. Thus, by the theorem of Brouwer, if $G$ is Roelcke precompact and non-Archimedean, the Roelcke completion is homeomorphic to Cantor space.

Second, if $\mathcal M=(M, (R^\mathcal M)_{R\in \mathcal L})$ is a compact metrizable $\mathcal L$-structure so that $M$ is homeomorphic to $2^\mathbb N$, let 
$$
\mathfrak A=({\rm clopen}(M), {\bf 0}^\mathfrak A, {\bf 1}^\mathfrak A,  \neg^\mathfrak A, \wedge^\mathfrak A, \vee^\mathfrak A, (R^\mathfrak A)_{R\in \mathfrak L})
$$ 
be the countable first-order structure, whose universe is the algebra, ${\rm clopen}(M)$, of clopen subsets of $M$ and where ${\bf 0}^\mathfrak A, {\bf 1}^\mathfrak A,  \neg^\mathfrak A, \wedge^\mathfrak A, \vee^\mathfrak A$ are respectively $\{\emptyset\}$, $\{M\}$ and the graphs of the  functions: complementation, intersection and union. Moreover, for each $R\in \mathcal L$, $$
R^\mathfrak A=\{(a_1,a_2,\ldots, a_n)\in {\rm clopen}(M)^n\mid R^\mathcal M\cap (a_1\times \ldots \times a_n)=\emptyset\},
$$
which simply codes the complement of the closed set $R^\mathcal M$ in $M$. Thus, as $M$ is homeomorphic to Cantor space, the algebra of clopen set is a countable atomless boolean algebra and so, by Stone duality, we see that $\mathfrak A$ is a complete invariant for $\mathcal M$, that is, that two compact metrizable structures $\mathcal M$ and $\mathcal M'$ with universes homeomorphic to Cantor space are homeomorphically isomorphic if and only if the associated countable structures $\mathfrak A$ and $\mathfrak A'$ are isomorphic.

\begin{lem}
Assume $\mathcal L$ is a countable relational language and define the expanded language $\mathcal L'=\mathcal L\cup \{{\bf 0}, {\bf 1}, \neg, \wedge, \vee\}$. Let also $\mathfrak C_\mathcal L=\{(M, (R^\mathcal M)_{R\in \mathcal L})\in \mathfrak K_\mathcal L\mid M\cong 2^\mathbb N\}$ be the Borel set of compact metrizable $\mathcal L$-structures with universe homeomorphic to $2^\mathbb N$. Then there is a Borel map 
$$
\mathcal M\in \mathfrak C_\mathcal L\;\;\mapsto\;\; \mathfrak A_\mathcal M\in {\rm Mod}(\mathcal L')
$$ 
so that $\mathcal M\cong \mathcal M'$ if and only if $ \mathfrak A_\mathcal M\cong  \mathfrak A_{\mathcal M'}$.
\end{lem}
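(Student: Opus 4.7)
The plan is to realize the assignment $\mathcal{M} \mapsto \mathfrak{A}_{\mathcal{M}}$ concretely by giving a Borel enumeration of $\mathrm{clopen}(M)$ and then reading off each operation and predicate of $\mathfrak{A}_{\mathcal{M}}$ directly from this enumeration; the abstract equivalence $\mathcal{M}\cong \mathcal{M}'\iff \mathfrak{A}_{\mathcal{M}}\cong \mathfrak{A}_{\mathcal{M}'}$ has already been handled via Stone duality in the paragraph preceding the lemma statement, so the only new content is definability.

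First, I would fix once and for all a countable family $\mathcal{V} = (V_n)_{n\in \mathbb{N}}$ of open subsets of $\mathcal{Q}$ which contains $\emptyset$ and $\mathcal{Q}$, contains a base for $\mathcal{Q}$, and is closed under finite unions. For each $M \in \mathfrak{C}_{\mathcal{L}}$, put
\[
I_M = \big\{ (n,m)\in \mathbb{N}^2 \;\big|\; V_n \cap V_m = \emptyset \text{ and } M \subseteq V_n \cup V_m \big\},
\]
which is Borel in $M$. Each pair $(n,m)\in I_M$ codes the clopen $V_n \cap M$, whose complement in $M$ is $V_m \cap M$. Conversely, by normality of $\mathcal{Q}$ together with the closure of $\mathcal{V}$ under finite unions, every clopen of $M$ arises in this way: given $A \sqcup B = M$ with $A,B$ closed in $\mathcal{Q}$, separate $A$ and $B$ by disjoint opens in $\mathcal{Q}$ and replace these by finite unions from $\mathcal{V}$. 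Passing to lex-minimal representatives in each class of the (Borel) equivalence $(n,m)\sim (n',m') \iff V_n \cap M = V_{n'} \cap M$ yields a Borel subset $J_M \subseteq I_M$ bijecting with $\mathrm{clopen}(M)$. Since $M \cong 2^{\mathbb{N}}$ the algebra $\mathrm{clopen}(M)$ is countably infinite, so $J_M$ is infinite; list it in increasing order as $j_0^M, j_1^M, \ldots$ and write $c_k^M$ for the clopen coded by $j_k^M$.

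Next, I would define $\mathfrak{A}_{\mathcal{M}} \in \mathrm{Mod}(\mathcal{L}')$ on universe $\mathbb{N}$ by setting
\[
\mathbf{0}^{\mathfrak{A}_{\mathcal{M}}}(k) \iff c_k^M = \emptyset, \qquad
\mathbf{1}^{\mathfrak{A}_{\mathcal{M}}}(k) \iff c_k^M = M,
\]
\[
\neg^{\mathfrak{A}_{\mathcal{M}}}(k,k') \iff c_k^M = M\setminus c_{k'}^M,
\]
\[
\wedge^{\mathfrak{A}_{\mathcal{M}}}(i,j,k) \iff c_i^M \cap c_j^M = c_k^M, \qquad \vee^{\mathfrak{A}_{\mathcal{M}}}(i,j,k) \iff c_i^M \cup c_j^M = c_k^M,
\]
and, for each $R \in \mathcal{L}$ of arity $\alpha$,
\[
R^{\mathfrak{A}_{\mathcal{M}}}(k_1,\ldots,k_\alpha) \iff R^{\mathcal{M}} \cap (c_{k_1}^M \times \cdots \times c_{k_\alpha}^M) = \emptyset.
\]
Using the Borel enumeration $k \mapsto j_k^M$, each of these translates into a Borel condition on $\mathcal{M}$ of one of the forms $M \cap V_n = \emptyset$, $M \cap \big((V_n \cap V_{n'}) \triangle V_{n''}\big) = \emptyset$, or $R^{\mathcal{M}} \cap (V_{n_1}\times\cdots\times V_{n_\alpha}) = \emptyset$, each of which is manifestly Borel on $\mathfrak{K}_{\mathcal{L}}$.

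Finally, by construction the bijection $k \mapsto c_k^M$ is an $\mathcal{L}'$-isomorphism from $\mathfrak{A}_{\mathcal{M}}$ onto the concrete Stone-dual invariant $\mathfrak{A}$ described in the paragraph preceding the lemma, so the desired equivalence $\mathcal{M}\cong \mathcal{M}'\iff \mathfrak{A}_{\mathcal{M}}\cong \mathfrak{A}_{\mathcal{M}'}$ reduces to the equivalence already asserted there. The main obstacle in the plan is showing that a single countable family $\mathcal{V}$ suffices to code all clopens of every $M \in \mathfrak{C}_{\mathcal{L}}$ in a uniformly Borel way, which reduces to the elementary separation fact above; everything after that is a routine Borel bookkeeping exercise.
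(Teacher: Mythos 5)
Your proposal is correct and follows essentially the same route as the paper: code the clopen subsets of $M$ by members of a fixed countable family of open subsets of $\mathcal Q$, observe that the relevant membership, equality, and emptiness conditions are Borel in $\mathcal M$, pass to minimal representatives to get a Borel enumeration of ${\rm clopen}(M)$, and read off the Boolean operations and the predicates $R^{\mathfrak A}$ from it, with completeness delegated to the Stone-duality discussion preceding the lemma. The only cosmetic difference is that you code clopens by pairs of genuinely disjoint opens (via normality), whereas the paper uses single basic opens $U_p$ together with a witness $U_q$ satisfying $M\cap U_p\cap U_q=\emptyset$ and $M\subseteq U_p\cup U_q$.
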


\begin{proof}
Fix a countable basis $\{U_n\}_{n\in \mathbb N}$ for the topology on $\mathcal Q$ closed under finite intersections and unions. Observe first that the collection $\mathcal  C\subseteq K(\mathcal Q)$ consisting of compact subsets $M$ homeomorphic to Cantor space is $G_\delta$. Also, if $M\in \mathcal C$ and $C\subseteq M$ is a clopen subset, then, since the basis is closed under finite unions,  there are $p,q$ so that $C=M\cap U_p$ and $X\setminus C=M\cap U_q$. Again, for fixed $p,q$, the conditions $M\cap U_p\cap U_q=\emptyset$ and $M\subseteq U_p\cup U_q$ are Borel in the variable $M\in \mathcal C$.  It follows that the condition
$$
M\cap U_p \text{ is clopen in } M
$$
is also Borel in $M\in \mathcal C$, whereby the same holds for the condition
$$
M\cap U_p \text{ is clopen in } M \;\;\&\;\; M\cap U_q \text{ is clopen in } M\;\;\&\;\; M\cap U_p=M\cap U_q.
$$
It follows that the set of $p$ so that  $M\cap U_p$ is clopen and $M\cap U_p\neq M\cap U_q$ for all $q<p$ can be computed in a Borel manner from $M$. Identifying  ${\rm clopen}(M)$ with the set these $p$, we obtain a representation of the clopen algebra in which the algebra operations are similarly Borel in $M$.

In other words, the complete invariant 
$$
\mathfrak A=({\rm clopen}(M), {\bf 0}^\mathfrak A, {\bf 1}^\mathfrak A,  \neg^\mathfrak A, \wedge^\mathfrak A, \vee^\mathfrak A, (R^\mathfrak A)_{R\in \mathfrak L})
$$ 
has an isomorphic realisation with universe included in $\mathbb N$ that can be computed in a Borel manner from $\mathcal M\in \mathfrak C_\mathcal L$. Reenumerating the universe as $\mathbb N$, we obtain a complete invariant $ \mathfrak A_\mathcal M\in {\rm Mod}(\mathcal L')$.
\end{proof}

Separating into the discrete and uncountable case, we arrive at the following conclusion.
\begin{prop}\label{non-Archimedean}
The relation of isomorphism between locally compact or Roelcke precompact non-Archimedean Polish groups is Borel reducible to isomorphism between countable structures, i.e., to the complete orbit equivalence relation $\mathsf E_{S_\infty}$ induced by an action of $S_\infty$.
\end{prop}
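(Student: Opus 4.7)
The strategy is simply to compose two Borel reductions already established in the paper, handling the exceptional case of discrete countable groups directly. Theorem \ref{main} provides a Borel reduction $G \mapsto \mathcal{M}_G \in \mathfrak{K}_\mathcal{L}$ from topological isomorphism of locally compact or Roelcke precompact Polish groups to homeomorphic isomorphism of compact metrizable $\mathcal{L}$-structures, and the preceding lemma provides a Borel reduction $\mathcal{M} \mapsto \mathfrak{A}_\mathcal{M}$ from homeomorphic isomorphism on the Borel subspace $\mathfrak{C}_\mathcal{L}$ of Cantor-universe structures to isomorphism in $\mathrm{Mod}(\mathcal{L}')$. So the plan reduces to arguing that, in the non-Archimedean setting, the image of the Theorem \ref{main} map lands inside $\mathfrak{C}_\mathcal{L}$---after handling countable groups separately.

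I would split the Borel parametrisation of non-Archimedean locally compact (respectively, Roelcke precompact) closed subgroups of $\mathbb{G}$ into the subset of discrete countable groups and its complement. Discreteness is Borel in $\mathbf{GRP}$, being expressible as
\[
\exists n\; \forall k\;\big(d(s_k(G),1)<\tfrac 1n \;\Rightarrow\; s_k(G)=1\big),
\]
and moreover, by the Baire category / homogeneity argument, a non-Archimedean Polish group is either discrete countable or uncountable and perfect. For a discrete countable group the selectors $(s_n)$ furnish a Borel enumeration of its universe and its multiplication table, yielding a direct Borel reduction of this subclass into $\mathrm{Mod}(\{\cdot\})$, and hence into $\mathsf{E}_{S_\infty}$.

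For the uncountable subclass, the paragraph preceding the lemma already shows that $M_G$---whether the Alexandrov compactification or the Roelcke completion---is zero-dimensional. It remains to see that $M_G$ is perfect: an uncountable Polish group has no isolated points by homogeneity, so the dense subset $G \subseteq M_G$ has none, and any putative isolated point of $M_G$ would lie inside $G$; in the locally compact non-compact case one also notes that $\ast$ is a limit point, since $G$ is non-compact. Brouwer's theorem then gives $M_G \cong 2^{\mathbb{N}}$, so the Theorem \ref{main} reduction factors through $\mathfrak{C}_\mathcal{L}$. Composing with $\mathcal{M} \mapsto \mathfrak{A}_\mathcal{M}$ supplies the required Borel reduction. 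Gluing the two partial reductions together (on complementary Borel pieces, using the disjoint union of the two model spaces inside one $\mathrm{Mod}(\mathcal{L}'')$) finishes the proof. The only substantive point, which I expect to be the main---though still modest---obstacle, is the verification that $M_G$ has no isolated points in the uncountable case; the remaining ingredients are purely a matter of composing previously established Borel maps.
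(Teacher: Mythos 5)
Your proposal is correct and follows essentially the same route as the paper, which likewise composes the reduction of Theorem \ref{main} with the lemma on Cantor-universe structures and simply says ``separating into the discrete and uncountable case''; you merely make explicit the details the paper leaves implicit (Borelness of the discrete/uncountable split, perfectness of $M_G$ via homogeneity and, in the locally compact case, of the point at infinity, and the direct coding of countable discrete groups into $\mathrm{Mod}(\{\cdot\})$). No gaps.
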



\section{An application to model theory}
The Borel reducibility theory for equivalence relations has played an important role in understanding the isomorphism relation between countable structures in a countable language. We note here, however, a consequence for the \emph{bi-interpretability} relation for $ \omega $-categorical structures. We recall that the automorphism group of any $ \omega $-categorical structure is Roelcke-precompact, and by \cite{ahlbrandt1986quasi}, two such structures are bi-interpretable if and only if their automorphism groups are isomorphic as topological groups. Combining this with the results of the last section gives,

\begin{prop} \label{bi-interpretability}
	The bi-interpretability relation for $ \omega $-categorical structures is classifiable by countable structures.
\end{prop}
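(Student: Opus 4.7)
The plan is to pass through automorphism groups: parameterise $\omega$-categorical structures by their automorphism groups, invoke Ahlbrandt--Ziegler to translate bi-interpretability into topological isomorphism of those groups, and then apply Proposition~\ref{non-Archimedean}.

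To make this precise, I fix a countable relational language $\mathcal L_{0}$ and consider the standard Borel space $\mathrm{Mod}(\mathcal L_{0})$. The map $M\mapsto \mathrm{Aut}(M)\in F(S_{\infty})$ is Borel in the Effros--Borel sense, since membership $\sigma\in \mathrm{Aut}(M)$ is a countable conjunction of Borel conditions on the pair $(M,\sigma)$; composing with a fixed Borel embedding of $S_{\infty}$ as a closed subgroup of a universal Polish group $\mathbb G$ then yields a Borel map into $\mathbf{GRP}$. For $\omega$-categorical $M$, the Ryll--Nardzewski theorem tells us that $\mathrm{Aut}(M)$ is oligomorphic, and a standard short argument shows that oligomorphic closed subgroups of $S_{\infty}$ are Roelcke precompact; being a closed subgroup of $S_{\infty}$, $\mathrm{Aut}(M)$ is automatically non-Archimedean as well. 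Hence the image of the map lies in the non-Archimedean members of $\mathbf{RPC}$.

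By the Ahlbrandt--Ziegler theorem cited in the preamble, two $\omega$-categorical structures $M$ and $N$ are bi-interpretable if and only if $\mathrm{Aut}(M)\cong \mathrm{Aut}(N)$ as topological groups. Hence $M\mapsto \mathrm{Aut}(M)$ is a Borel reduction from bi-interpretability to topological isomorphism of non-Archimedean Roelcke precompact Polish groups, and post-composing with the Borel reduction of the latter relation to $\mathsf E_{S_{\infty}}$ furnished by Proposition~\ref{non-Archimedean} completes the proof. The main obstacle is simply the book-keeping required to verify the Borel measurability of $M\mapsto \mathrm{Aut}(M)$ as a map into $\mathbf{GRP}$ and to check that its image lands in the appropriate Borel subclass; everything else is an immediate composition with results already at hand.
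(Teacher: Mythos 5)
Your overall route is the same as the paper's---reduce bi-interpretability to topological isomorphism of automorphism groups via Ahlbrandt--Ziegler and then apply Proposition \ref{non-Archimedean}---but the step you dismiss as ``book-keeping'' is precisely where the real content lies, and your justification of it is wrong. You claim that $M\mapsto \operatorname{Aut}(M)\in F(S_\infty)$ is Borel ``since membership $\sigma\in\operatorname{Aut}(M)$ is a countable conjunction of Borel conditions on the pair $(M,\sigma)$.'' Borelness of the relation $\{(M,\sigma)\mid \sigma\in\operatorname{Aut}(M)\}$ does not give Borelness of the map into the Effros--Borel space: one needs the sets $\{M\mid \operatorname{Aut}(M)\cap U\neq\emptyset\}$ to be Borel for open $U\subseteq S_\infty$, and these involve an existential quantifier over $\sigma\in U$, so a priori they are only analytic. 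Indeed, on all of $\operatorname{Mod}(\mathcal L_0)$ the stabilizer map is in general \emph{not} Borel (for suitable languages the set of structures admitting a nontrivial automorphism is properly analytic), so some use of $\omega$-categoricity is indispensable at exactly this point, and your argument makes no such use.

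The paper closes this gap as follows: on the Borel set $\mathcal C$ of $\omega$-categorical structures, isomorphism coincides with elementary equivalence, which is smooth and in particular Borel; hence the orbit equivalence relation of the logic action restricted to $\mathcal C$ is Borel, and by Theorem 7.1.2 of \cite{becker1996descriptive} the stabilizer map $M\mapsto\operatorname{Stab}(M)=\operatorname{Aut}(M)$ is then Borel on $\mathcal C$. (An alternative direct repair, still using $\omega$-categoricity, is to note that by Ryll--Nardzewski a finite partial map in an $\omega$-categorical structure extends to an automorphism if and only if it preserves all first-order types, so that $\{M\in\mathcal C\mid \operatorname{Aut}(M)\cap[p]\neq\emptyset\}$ is Borel for every basic open $[p]$.) You should also say a word about why the class of $\omega$-categorical structures you are reducing from is a standard Borel space (the paper verifies that $\mathcal C$ is Borel via the Engeler--Ryll-Nardzewski--Svenonius characterisation); the remaining ingredients of your proposal---oligomorphic groups are Roelcke precompact and non-Archimedean, and composition with Proposition \ref{non-Archimedean}---are fine.
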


To make sense of this proposition, we first let $ \widehat{\mathcal{L}} $ be a language with infinitely many predicate symbols of each arity. The space $ \operatorname{Mod}(\widehat{\mathcal{L}}) $ of countable structures in the language $ \widehat{\mathcal{L}} $ we view as parameterizing all countable structures up to bi-interpretability, by associating a structure $ \mathcal{M} =  \{ a_{1}, a_{2}, \dots \} $ in a language $ \mathcal{L} $ with the structure in $ \operatorname{Mod}(\widehat{\mathcal{L}}) $ obtained by taking any arity-preserving injection $ j\colon \mathcal{L} \to \widehat{\mathcal{L}} $ and letting $ R(k_{1},\dots,k_{n}) $ hold for $ R \in \widehat{\mathcal{L}} $ if and only if $ R \in j[\mathcal{L}] $ and $ \mathcal{M} \models j^{-1}(R)(a_{k_{1}},\dots,a_{k_{n}}) $.

Second, we let $ \mathcal{C} \subseteq \operatorname{Mod}(\mathcal{\widehat{L}}) $ consist of the $ \omega $-categorical structures, and note that this set is Borel. For by the theorem of Engeler, Ryll-Nardzewski, and Svenonius, $ M \in \mathcal{C} $ if and only if $ M $ realizes only finitely many $ n $-types for each $ n $. Let 
$ F_{n} = \{\varphi \mid \varphi \text{ is an } \widehat{\mathcal{L}} \text{-formula in } n \text{ free variables} \} $, and let $ S_{n} = 2^{F_{n}} $, with compatible metric $ d $. For $ \bar{a} \in \omega^{n} $, let 
$ f_{\bar{a}}\colon \operatorname{Mod}(\widehat{\mathcal{L}}) \to S_{n} $ be
\[ f_{\bar{a}}(M)(\varphi) =
\begin{cases}
1 & \mid M \models \varphi(\bar{a}) \\
0 & \mid M \not\models \varphi(\bar{a}) 
\end{cases}
\]
and as $\operatorname{Mod}(\varphi,\bar{a}) = \{M \in \operatorname{Mod}(\widehat{\mathcal{L}}) \mid M \models \varphi(\bar{a})  \} $ is Borel for any tuple $ \bar{a} $ and $ \widehat{\mathcal{L}} $-formula (in fact, $ \mathcal{L}_{\omega_{1}\omega} $-formula), $ \varphi $, the functions $ f_{\bar{a}} $ are Borel-measurable.

Then \[ M \in \mathcal{C} \text{ iff }  \forall n, \exists m, \forall \bar{a}, \bar{b} \in \omega^{n}, \left[f_{\bar{a}}(M) = f_{\bar{b}}(M) \vee d(f_{\bar{a}}(M),f_{\bar{b}}(M)) \geqslant \frac{1}{m} \right] \]
so $ \mathcal{C} $ is Borel.

Let $ \cong $, $ \equiv $, and $ \approx $ denote respectively the relations of isomorphism, elementary equivalence, and bi-interpretability on $ \operatorname{Mod}(\widehat{\mathcal{L}}) $. Observe that $ \mathcal{C} $ is an invariant set for each relation. 

\begin{proof}[Proof of Proposition \ref{bi-interpretability}]
	First, we note that $ \equiv $ is a smooth equivalence relation on $ \operatorname{Mod}(\widehat{\mathcal{L}}) $. (The collection $ \{\operatorname{Mod}(\sigma) \mid \sigma \text{ is a sentence} \} $ is a countable Borel separating family.) In particular, $ \equiv $, and so $ \equiv \upharpoonright \mathcal{C} $, is a Borel equivalence relation. But $ \equiv \upharpoonright \mathcal{C} $ is $\cong \upharpoonright \mathcal{C} $, or in other words, the action of $ S_{\infty} $ which induces the isomorphism relation has a Borel orbit equivalence relation. Therefore, by \cite{becker1996descriptive} Theorem 7.1.2, the function $ \mathcal{C} \to F(S_{\infty}) $ taking $ M \mapsto \operatorname{Stab}(M) $ is Borel-measurable. But $ \operatorname{Stab}(M) = \operatorname{Aut}(M) $, so this function is a reduction to isomorphism of non-Archimedean Roelcke-precompact Polish groups, and hence classifiable by countable structures.
\end{proof}


\section{Comparing classes of structures}
As it turns out, the viewpoint of abstract topological or metric structures is useful when comparing complexities of naturally occurring classification problems. So let us consider some of the various types of structures that appear in the literature.  For this, fix a countable relational language $\mathcal L$.

First, a {\em Polish metric $\mathcal L$-structure} is a tuple $\mathcal M=(M,d,(R^\mathcal M)_{R\in \mathcal L})$, where $(M,d)$ is a separable, complete metric space and each $R^\mathcal M$ is a closed subset of $M^{\alpha(R)}$. 
In case that $(M,d)$ is not only separable and complete, but actually compact, we say that $\mathcal M$ is a {\em compact metric $\mathcal L$-structure}. Similarly, if $d$ is a proper metric or if $M$ is locally compact, $\mathcal M$ is called a {\em proper metric $\mathcal L$-structure}, respectively a {\em locally compact metric $\mathcal L$-structure}.

We also define the relations of {\em isometric isomorphism}, {\em bi-Lipschitz isomorphism}, {\em uniformly homeomorphic isomorphism} and {\em homeomorphic isomorphism} between Polish metric structures. Namely, an isomorphism $\phi\colon M\til N$ between the underlying discrete structures $(M,(R^\mathcal M)_{R\in \mathcal L})$  and $(N,(R^\mathcal N)_{R\in \mathcal L})$ will be an isometric, bi-Lipschitz, uniformly homeomorphic or homeomorphic isomorphism if, respectively, $\phi$ and $\phi^{-1}$ are isometries, Lipschitz, uniformly continuous or continuous. 
Evidently, any  homeomorphic isomorphism between compact metric structures is automatically a uniformly homeomorphic isomorphism. In general, however, all these notions differ.

We may parametrize Polish metric structures by identifying them with subsets of the Urysohn metric space equipped with a set of closed relations. As all the classes of spaces are Borel, this provides a Borel parametrisation of each of these classes of structures. Moreover, except possibly for the case of homeomorphic isomorphism of Polish metric structures, this gives rise to an analytic equivalence relation. For example, to see that the relation of uniformly homeomorphic isomorphism of Polish metric structures is analytic, observe that two Polish metric $\mathcal L$-structures $\mathcal M$ and $\mathcal N$ are uniformly homeomorphically isomorphic if and only if
\[\begin{split}
\exists & (x_n)\in M\; \exists (y_n)\in N\; \Big( \overline{\{x_n\}}=M\;\&\; \overline{\{y_n\}}=N\;\&\\
& \forall R\in \mathcal L\; R^{\mathcal M}=\overline{R^{\mathcal M}\cap \{x_n\}^{\alpha(R)}}\;\&\; R^{\mathcal N}=\overline{R^{\mathcal N}\cap \{y_n\}^{\alpha(R)}}\\
& \&\; \forall \epsilon>0\;\exists \delta>0\;\big( d(x_i,x_j)<\delta \to d(y_i,y_j)<\epsilon\big)\\
& \& \; \forall \epsilon>0\;\exists \delta>0\; \big(d(y_i,y_j)<\delta \to d(x_i,x_j)<\epsilon\big)\\
& \& \;\forall R\in \mathcal L\; \big((x_{i_1}, \ldots, x_{i_{\alpha(R)}})\in R^{\mathcal M}\leftrightarrow (y_{i_1}, \ldots, y_{i_{\alpha(R)}})\in R^{\mathcal M}\big)\Big).
\end{split}\]

The only relation standing out is homeomorphic isomorphism between Polish metric structures of which the exact complexity is still unknown.

Let us also mention that the choice of considering only relational languages is no real restriction. Indeed, if $F$ is an $n$-ary function symbol in $\mathcal L$ and $F^\mathcal M$ is an interpretation of $F$ as a continuous function on $\mathcal M$, then we could simply introduce a symbol for the graph of $F^\mathcal M$, which is a closed subset of $M^{n+1}$. Moreover, isomorphisms preserving the graphs will also necessarily preserve the functions and vice versa.

One way of phrasing the statement of Proposition \ref{bireducible to Egrp} is that, for any countable $ \mathcal{L} $, homeomorphic isomorphism between compact $ \mathcal{L} $-structures is Borel bi-reducible with homeomorphism between compact metric spaces (i.e., homeomorphic isomorphism  of compact structures in the empty language). That is, the imposition of the additional structure from $\mathcal L$ does not actually make the corresponding relation more complicated from the perspective of Borel reducibility. As it turns out, this same can be said for all of the relations
\begin{itemize}
\item homeomorphic isomorphism,
\item uniformly homeomorphic isomorphism,
\item bi-Lipschitz isomorphism,
\item isometric isomorphism,
\end{itemize}
when considered between compact metric structures. Moreover, with the possible exception of homeomorphic isomorphism, this remains true for the class of Polish metric structures.

Indeed, for isometric isomorphism in class of Polish metric structures, this was established in \cite{elliott2013isomorphism}. And as bi-Lipschitz isomorphism and uniform homeomorphism of Polish metric spaces are both complete analytic equivalence relations \cite{ferenczi2009complexity}, the same is true a posteriori for the corresponding relations for Polish metric $ \mathcal{L} $-structures. Moreover, as any homeomorphism between compact metric spaces is a uniform homeomorphism, the claim for uniformly homeomorphic isomorphism of compact metric structures holds by Proposition \ref{bireducible to Egrp}.

So it remains to see that this is true for isometry and bi-Lipschitz isomorphism of compact metric spaces, a smooth and a complete $ K_{\sigma} $ equivalence relation, respectively.

In the following, we consider a fixed language $ \mathcal{L} = \{R_{1},R_{2},\dots \} $, where $R_k$ is a relation symbol of arity $k$. Since $\mathcal L$ has symbols of unbounded arity, isometric isomorphism of arbitrary compact metric structures is easily reducible to isometric isomorphism of compact metric $\mathcal L$-structures.  Also, if $(X,d)$ is a compact metric space, we equip each power $X^n$ with the compatible metric $d_\infty$ defined by
$$
d_\infty\big((x_1,\ldots, x_n),(y_1,\ldots, y_n)\big).
$$

\begin{thm} \label{isometry smooth}
Isometric isomorphism of compact metric $ \mathcal{L} $-structures is smooth.
\end{thm}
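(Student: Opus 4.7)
The plan is to adapt Gromov's classical reconstruction of a compact metric space from the set of finite-tuple distance matrices, enlarging the invariant so that it also records where the closed relations sit. For each compact metric $\mathcal{L}$-structure $\mathcal{M}=(M,d^{\mathcal{M}},(R^{\mathcal{M}})_{R\in\mathcal{L}})$ and each $n\in\mathbb{N}$ I will produce a compact subset $K_{n}(\mathcal{M})$ of a fixed Polish space $Y_{n}$ so that the assignment $\mathcal{M}\mapsto(K_{n}(\mathcal{M}))_{n\geqslant 1}$ is a Borel map to the standard Borel space $\prod_{n}K(Y_{n})$ and so that it is a complete invariant for isometric isomorphism. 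Smoothness follows immediately, since equality on a Polish space is smooth.

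Concretely, let $\Lambda_{n}=\{(R,i_{1},\ldots,i_{\alpha(R)})\mid R\in\mathcal{L},\;1\leqslant i_{j}\leqslant n\}$, a countable index set, and set $Y_{n}=[0,\infty)^{n\times n}\times[0,\infty)^{\Lambda_{n}}$. Define the continuous map
\[
\Phi_{n}^{\mathcal{M}}\colon M^{n}\to Y_{n},\quad (x_{1},\ldots,x_{n})\mapsto\bigl((d^{\mathcal{M}}(x_{i},x_{j}))_{i,j\leqslant n},\,(d_{\infty}((x_{i_{1}},\ldots,x_{i_{\alpha(R)}}),R^{\mathcal{M}}))_{(R,\bar{\imath})}\bigr),
\]
where the distance to $R^{\mathcal{M}}$ is computed in $(M^{\alpha(R)},d_{\infty})$, and let $K_{n}(\mathcal{M})=\Phi_{n}^{\mathcal{M}}[M^{n}]$. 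Borelness of $\mathcal{M}\mapsto K_{n}(\mathcal{M})$ is handled by fixing Kuratowski--Ryll-Nardzewski dense sequences in $M$ and in each $R^{\mathcal{M}}$: the distances $d_{\infty}(\bar{x},R^{\mathcal{M}})$ then become infima over countably many Borel functions of $\mathcal{M}$, and $K_{n}(\mathcal{M})$ is the closure in $Y_{n}$ of a Borel-in-$\mathcal{M}$ countable subset, giving a Borel assignment into $K(Y_{n})$. One direction of completeness is immediate, since any isometric isomorphism preserves $d^{\mathcal{M}}$ and maps $R^{\mathcal{M}}$ onto $R^{\mathcal{N}}$.

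The real work, and the main obstacle, is the converse: given $K_{n}(\mathcal{M})=K_{n}(\mathcal{N})$ for every $n$, one must construct an isometric isomorphism $\phi\colon\mathcal{M}\to\mathcal{N}$. Fix a dense sequence $(x_{i})_{i\geqslant 1}$ in $M$. For each $n$, choose a preimage $(y_{1}^{(n)},\ldots,y_{n}^{(n)})\in N^{n}$ of $\Phi_{n}^{\mathcal{M}}(x_{1},\ldots,x_{n})\in K_{n}(\mathcal{N})$ under $\Phi_{n}^{\mathcal{N}}$, pad to an element of $N^{\mathbb{N}}$, and extract a coordinatewise limit $(y_{i})_{i\geqslant 1}\in N^{\mathbb{N}}$ using compactness of $N^{\mathbb{N}}$. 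Passage to the limit yields $d^{\mathcal{N}}(y_{i},y_{j})=d^{\mathcal{M}}(x_{i},x_{j})$ and $d_{\infty}((y_{i_{1}},\ldots,y_{i_{\alpha(R)}}),R^{\mathcal{N}})=d_{\infty}((x_{i_{1}},\ldots,x_{i_{\alpha(R)}}),R^{\mathcal{M}})$ for all indices and all $R$. Extending $x_{i}\mapsto y_{i}$ by uniform continuity produces an isometric embedding $\phi\colon M\to N$, and closedness of the relations together with the agreement of distances to them forces $\phi$ to both preserve and reflect each $R$ on the dense set $\{x_{i}\}$, hence on all of $M$. A symmetric construction produces an isometric embedding $\psi\colon N\to M$; since an isometric self-embedding of a compact metric space is automatically surjective, $\psi\circ\phi$ is onto $M$, so both $\phi$ and $\psi$ are bijective, giving the required isometric isomorphism. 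The decisive trick here is that closed-relation membership does not pass to compactness limits if encoded by characteristic functions, so one replaces it by the continuous real-valued surrogate $d_{\infty}(\,\cdot\,,R^{\mathcal{M}})$ and recovers membership in the limit as the vanishing of this distance.
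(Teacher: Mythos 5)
Your proof is correct, and while it follows the same overall Gromov-style strategy as the paper (enriched finite distance invariants, a diagonal limit to build an isometric embedding from a dense sequence, a back embedding, and the fact that an isometric self-embedding of a compact space is surjective), the way you encode the relational data is genuinely different. The paper fixes, for each pattern $\zeta=(S_1,\dots,S_n)$ of membership constraints, the compact set $C_\zeta(\mathcal X)\subseteq\mathbb R^{n^2}$ of distance matrices of $n$-tuples whose $S_k$-indexed subtuples lie in $R_k^{\mathcal X}$; you instead append to each distance matrix the continuous coordinates $d_\infty(\bar x_{\bar\imath},R^{\mathcal M})$. Your surrogate buys two simplifications: you need only an arbitrary dense sequence in $M$ (the paper must choose the sequence so that its tuples are additionally dense in each $R_k^{\mathcal X}$, and must record that density to push the relations forward), and the limit map automatically both preserves and reflects each relation, since the two continuous functions $\bar a\mapsto d_\infty(\bar a,R^{\mathcal M})$ and $\bar a\mapsto d_\infty(\phi^{\alpha(R)}(\bar a),R^{\mathcal N})$ agree on a dense set and hence everywhere; the paper instead only gets $f[R_k^{\mathcal X}]\subseteq R_k^{\mathcal Y}$ and must run the compactness-surjectivity argument a second time, applied to $g\circ f\colon R_k^{\mathcal X}\to R_k^{\mathcal X}$ with the $d_\infty$ metric, to recover surjectivity and reflection on each relation (you still need that argument, but only for the underlying spaces, and your final deduction that $\psi\circ\phi$ and $\phi\circ\psi$ surjective force $\phi$ bijective is sound). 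The costs are negligible: your invariant lives in an infinite product rather than in $\mathbb R^{n^2}$, which is harmless for smoothness, and you should replace the codomain $[0,\infty)$ of the relation-distance coordinates by $[0,\infty]$ (or otherwise flag empty interpretations $R^{\mathcal M}=\emptyset$), since $d_\infty(\bar x,\emptyset)=\infty$; with that cosmetic fix the Borelness argument via Kuratowski--Ryll-Nardzewski selectors and closures of countable Borel-in-$\mathcal M$ sets goes through as you describe.
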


\begin{proof}
For each tuple $\zeta = (S_{1},S_{2},\dots,S_{n}) $ with $S_{k} \subseteq  \{1,\dots,n\}^k$ and every compact metric structure $\mathcal X=(X,(R_{k}^{\mathcal X})) $, let 
$$ 
C_{\zeta}(\mathcal X)=\big\{ [d(x_i,x_j)]_{i,j\leqslant n}    \;\big|\;x_i\in X 
\;\; \&\;\; 
 (x_{s_1}, \ldots, x_{s_k})\in R_k^\mathcal X, \forall  k\leqslant n,  s\in S_k \big \}.
$$ 
Clearly, each $ C_{\zeta}(\mathcal X) $ is a compact subset of $\mathbb R^{n^2}$,  invariant for the isometric isomorphism type of $\mathcal X$. 

Conversely, suppose that $\mathcal X=(X,(R_{k}^{\mathcal X})) $ and $\mathcal Y=(Y,(R_{k}^{\mathcal Y}))$ are two compact metric $\mathcal L$-structures so that $C_{\zeta}(\mathcal X) = C_{\zeta}(\mathcal Y)$ for all $\zeta$. Pick a sequence $ (x_{m})_{m \in \mathbb{N}}$ enumerating a countable dense subset of $X$ so that also the set of tuples $(x_{s_1}, \ldots, x_{s_k})\in R^\mathcal X_k$ is dense in $R^\mathcal X_k$ for each $k$. 

As $C_{\zeta}(\mathcal X) = C_{\zeta}(\mathcal Y)$ for all $\zeta$, for each $ n $, we may find an $n$-tuple, $(y_{1}^{n}, \dots, y_{n}^{n}) \in Y^{n} $ with the same distance matrix $[d(y^n_i,y^n_j)]_{i,j}$ as $(x_{1}, \dots, x_{n}) $ so that
$$
(x_{s_1}, \ldots, x_{s_k})\in R_k^\mathcal X\Rightarrow  (y^n_{s_1}, \ldots, y^n_{s_k})\in R_k^\mathcal Y 
$$
for all $k\leqslant n$ and $s\in \{1,\dots,n\}^k$.

Then, by a diagonalization and passing to limits, we may find an infinite sequence $(y_1, y_2, \ldots)$ in $Y$ with the same distance matrix as $(x_1,x_2,\ldots)$ and still satisfying 
$$
(x_{s_1}, \ldots, x_{s_k})\in R_k^\mathcal X\Rightarrow  (y_{s_1}, \ldots, y_{s_k})\in R_k^\mathcal Y 
$$
for all $k$ and $s\in \mathbb N^k$. Extending the map $x_i\mapsto y_i$  to the completion determines an isometric embedding $ f\colon X \to Y $. Moreover, for every $k$, $f$ maps $R^\mathcal X_k$ into $R^\mathcal Y_k$.

A symmetric argument produces an isometric embedding $ g\colon Y \to X $ that maps each 
$R^\mathcal Y_k$ into $R^\mathcal X_k$. It thus follows that $g\circ f$ is an isometric embedding of the compact metric space $X$ into itself and hence must be surjective. Similarly, for each $k$, $g\circ f\colon R^\mathcal X_k\to R^\mathcal X_k$ is an isometric embedding with respect to the product metric $d_\infty$ on $X^n$ and thus again surjective.    

It therefore follows that $f$ is an isometric isomorphism of $\mathcal X$ with $\mathcal Y$ and so the sequence $\big(C_\zeta(\mathcal X)\big)_\zeta$ is a complete invariant for isometric isomorphism.
\end{proof}

Just as the family of distance matrices corresponding to finite subsets of a compact metric space is a complete invariant for the isometry type and hence induces a smooth equivalence relation  as shown by M. Gromov (Propositions 3.2 and 3.6 \cite{gromov}), Theorem \ref{isometry smooth} demonstrates that the \emph{isometric isomorphism} type of a \emph{compact metric structure} is captured by a slightly modified invariant that accounts for the distance matrices for finite sets satisfying constraints given by the relational structure.

The relation of bi-Lipschitz isomorphism between compact metric spaces was shown to be bi-reducible with a complete $ K_{\sigma} $ equivalence relation in \cite{rosendal2005cofinal}. In particular, it is reducible to a $ K_{\sigma} $ relation; one sees this by associating, to a given metric space, a more complicated invariant derived from distance matrices of finite sets in $ X $. Next we augment this construction in a manner similar to the proof of Theorem \ref{isometry smooth}. Consequently, for any countable $ \mathcal{L} $, the relation of \emph{bi-Lipschitz} isomorphism between compact metric $ \mathcal{L} $-structures is reducible to a $ K_{\sigma} $, and is therefore bi-reducible with the complete $ K_{\sigma} $ equivalence relation.

\begin{thm}\label{bi-Lipschitz}
The relation of bi-Lipschitz isomorphism between compact metric $ \mathcal{L} $-structures is bi-reducible with the complete $K_{\sigma}$ equivalence relation.
\end{thm}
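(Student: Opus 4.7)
The statement is a bi-reducibility, and splits into a lower-bound (completeness) and an upper bound ($K_\sigma$-ness). For the lower bound, I invoke \cite{rosendal2005cofinal}, where bi-Lipschitz homeomorphism of compact metric spaces (empty language) is shown to be already complete $K_\sigma$. The Borel map $K(\mathcal{Q}) \to \mathfrak{K}_\mathcal{L}$ sending $M$ to $(M, (M^{\alpha(R)})_{R \in \mathcal{L}})$, with every relation interpreted as the full power, is clearly a reduction from the empty-language case, placing bi-Lipschitz isomorphism on compact metric $\mathcal{L}$-structures Borel-above the complete $K_\sigma$.

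For the upper bound, I adapt the invariant of Theorem \ref{isometry smooth} in the spirit of \cite{rosendal2005cofinal}. After rescaling each structure to have diameter at most $1$ (a Borel operation that preserves bi-Lipschitz isomorphism, adjusting constants only by a factor of the diameter ratio), the Borel map $\mathcal{X} \mapsto (C_\zeta(\mathcal{X}))_\zeta$ takes values in the compact Polish space $\prod_\zeta K([0,1]^{n^2})$. For each integer $L \geq 1$, declare $(C_\zeta)_\zeta \sim_L (D_\zeta)_\zeta$ when, for every $\zeta$ and every $[d_{i,j}] \in C_\zeta$, there is $[d'_{i,j}] \in D_\zeta$ with $L^{-1} d_{i,j} \leq d'_{i,j} \leq L\, d_{i,j}$, together with the symmetric condition. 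Each $\sim_L$ is then closed in the ambient compact space, hence compact, and the union $\bigcup_{L} \sim_L$ is an equivalence relation (composing an $L$-match with an $L'$-match yields an $LL'$-match) of type $K_\sigma$. The central claim is that this pulls back via the invariant map to bi-Lipschitz isomorphism on $\mathfrak{K}_\mathcal{L}$.

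The forward direction is routine: an $L$-bi-Lipschitz isomorphism $\mathcal{X} \to \mathcal{Y}$ induces an $L$-bi-Lipschitz correspondence of distance matrices preserving the constraints coded by $\zeta$. For the converse, I follow the proof of Theorem \ref{isometry smooth}: enumerating a dense sequence $(x_n) \subseteq X$ whose constraint-satisfying tuples are dense in each $R_k^{\mathcal{X}}$, the $\sim_L$ hypothesis supplies for each $n$ a corresponding $n$-tuple in $Y^n$ with $L$-bi-Lipschitz-matched distance matrix respecting the constraints; an Arzel\`a--Ascoli diagonalization then produces a bi-Lipschitz embedding $f\colon X \to Y$ mapping each $R_k^{\mathcal{X}}$ into $R_k^{\mathcal{Y}}$, and symmetrically a bi-Lipschitz embedding $g\colon Y \to X$ preserving the relational constraints.

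The main obstacle is upgrading the pair $(f, g)$ to a genuine bi-Lipschitz isomorphism. Unlike isometric self-embeddings of a compact metric space, bi-Lipschitz self-embeddings need not be surjective---the map $x \mapsto x/2$ on $[0,1]$ is an example---so one cannot simply conclude, as in the proof of Theorem \ref{isometry smooth}, that $g \circ f$ is the identity. This is precisely the obstacle resolved in \cite{rosendal2005cofinal} in the empty-language case, via a back-and-forth construction with uniform Lipschitz control. That argument should adapt essentially unchanged to the relational setting, since the additional requirement that the maps preserve the closed relations $R_k$ is itself a closed condition, hence automatically preserved under the limiting operations involved.
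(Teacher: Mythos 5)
Your lower bound and the reduction to the single language $\mathcal L=\{R_1,R_2,\dots\}$ match the paper, and the forward direction of your central claim is fine; the gap is in the claim itself. The invariant $(C_\zeta(\mathcal X))_\zeta$ of Theorem \ref{isometry smooth} together with your relation $\sim_L$ does \emph{not} pull back to bi-Lipschitz isomorphism. After the diagonalization you describe, the $\sim_L$-condition amounts exactly to the existence of $L$-bi-Lipschitz embeddings $\mathcal X\to\mathcal Y$ and $\mathcal Y\to\mathcal X$ mapping each $R_k$ into the corresponding relation, and such mutual embeddability is strictly weaker than isomorphism. Concretely, take $X=[0,1]$ and $Y=[0,1]\cup\{2\}$ as subspaces of $\mathbb R$, with every $R_k$ interpreted as the empty relation. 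Every finite tuple of $X$ has an exact match in $Y$, and the homothety $t\mapsto t/2$, $2\mapsto 1$ shows every finite tuple of $Y$ has a $2$-matched tuple in $X$; hence the two structures are $\sim_2$-related, yet $X$ and $Y$ are not even homeomorphic. So the $K_\sigma$ relation you reduce to is genuinely coarser than bi-Lipschitz isomorphism, and no refinement of the converse argument (which you correctly identify as the problematic step, since $g\circ f$ need not be surjective) can repair it.

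Your appeal to \cite{rosendal2005cofinal} also misidentifies what is done there: the empty-language case is not handled by a Schroeder--Bernstein-style back-and-forth on top of the weak invariant (which the example above rules out), but by strengthening the invariant itself, and that is precisely the route taken in the paper. One records, for tuples $\zeta=(S_k,r_k,t_{p,k})_{p\leqslant k\leqslant n}$, the $\alpha$-perturbed distance matrices of those $n$-tuples whose initial segments $\{x_1,\dots,x_k\}$ are $r_k$-dense in $X$ and whose constrained subtuples are $t_{p,k}$-dense in $R_p^{\mathcal X}$. With this density data built in, a single one-sided containment $E_{\zeta,\alpha}(\mathcal X)\subseteq E_{c\zeta,c\alpha}(\mathcal Y)$ for all $\zeta,\alpha$ already forces the limit map $x_i\mapsto y_i$ to have dense image in $Y$ and to send each $R_p^{\mathcal X}$ onto a dense subset of $R_p^{\mathcal Y}$, so its extension is automatically a surjective bi-Lipschitz isomorphism of structures; no second embedding and no back-and-forth are needed. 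Your construction of $f$ goes through essentially verbatim once these density constraints are added to the invariant, and the resulting condition is still essentially $K_\sigma$ (Example (ii) below Definition 18 of \cite{rosendal2005cofinal}), which is how the paper completes the upper bound.
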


\begin{proof}
Since in \cite{rosendal2005cofinal} this was already proved for the empty language $\mathcal L=\emptyset$, it suffices now to consider the other extreme, namely, when $\mathcal L=\{R_1, R_2, \ldots\}$ where each $R_k$ has arity $k$. We must show that bi-Lipschitz isomorphism between compact metric $ \mathcal{L} $-structures is Borel reducible to a $K_\sigma$ equivalence relation.
For $\alpha>0$, an $\alpha$-perturbation of a matrix $[a_{ij}]$ is  defined to be a matrix $[b_{ij}]$ so that $\frac 1\alpha a_{ij}\leqslant b_{ij}\leqslant \alpha a_{ij}$ for all $i,j$.

For every tuple $ \zeta=(S_k, r_k, t_{p,k})_{p\leqslant k\leqslant n}$ with $S_k\subseteq \{1, \dots, n\}^{k} $ and $r_k,t_{p,k}\in \mathbb Q_+$, and  every compact metric $\mathcal L$-structure $\mathcal X= (X,d,(R_{k}^{\mathcal X})) $, let $ D_{\zeta} (\mathcal X)$ denote the compact set  of all $ n $-tuples $ (x_{1}, \dots, x_{n}) \in X^{n} $ satisfying
\begin{enumerate}
\item $ \{x_{1}, \dots, x_{k}\} $ is  $ r_k$-dense in $ X $ for all $k\leqslant n$,
\item $(x_{s_1}, \ldots, x_{s_k})\in R_k^\mathcal X$ for all $s=(s_1, \ldots, s_k)\in S_k$, and
\item the set  $\big\{ (x_{s_1}, \ldots, x_{s_p})\;\big|\; s\in S_p\cap \{1,\ldots, k\}^p \big\}$ is  $ t_{p,k}$-dense in $R_p^\mathcal X$ for  all $p\leqslant k\leqslant n$ (with respect to the $ d_{\infty} $ metric).	
\end{enumerate}
Let also $ E_{\zeta,\alpha}(\mathcal X) \subseteq \mathbb{R}^{n \times n}  $ denote the compact set of all $\alpha$-perturbations of distance matrices $[d(x_i, x_j)]$ of tuples $(x_1,\ldots, x_n)\in D_{\zeta}(\mathcal X)$.

We claim that two compact metric $\mathcal L$-structures $\mathcal X$ and $\mathcal Y$ are  bi-Lipschitz isomorphic if and only if
\[
 \exists c \in \mathbb{Q}_{+} \; \forall \zeta,\alpha \quad E_{\zeta,\alpha}(\mathcal X) \subseteq E_{c\zeta, c\alpha}(\mathcal Y), 
\]
where $ c \zeta = (S_k, cr_k, ct_{p,k})_{p\leqslant k\leqslant n}$. As this relation is essentially $ K_{\sigma} $ (see Example (ii) below Definition 18 \cite{rosendal2005cofinal}),  this claim establishes the theorem.
	
So suppose $ f\colon  X \to Y $ is a bi-Lipschitz isomorphism with bi-Lipschitz constant $c\in \mathbb Q_+$. Then it is straightforward to verify that $E_{\zeta,\alpha}(\mathcal X) \subseteq E_{c\zeta, c\alpha}(\mathcal Y)$ for all tuples $\zeta$ and $\alpha\in \mathbb Q_+$.

Conversely, suppose that $c>0$ is so that
$$
E_{\zeta,\alpha}(\mathcal X) \subseteq E_{c\zeta, c\alpha}(\mathcal Y)
$$
for all $\zeta$,  $\alpha$. Pick a sequence $ (x_{m}) $ enumerating a dense subset of $ X $ so that, moreover, the set of tuples $(x_{s_1}, \ldots, x_{s_k})\in R_k^\mathcal X$ is dense in $R_k^\mathcal X$ for all $k$.
For all $k\leqslant n$, let 
$$ 
S_k^n = \big\{s\in \{1, \dots, n \}^{k} \;\big|\; (x_{s_1}, \ldots, x_{s_k})\in R_k^\mathcal X\big\}.
$$  
Fix  also $r_n\in \mathbb{Q}_+ $ so that $\{x_{1}, \dots, x_{n}\} $ is  $r_n$-dense in $ X $ and pick $t_{k,n}\in \mathbb Q_+$ so that the set  
$$
\big\{ (x_{s_1}, \ldots, x_{s_k})\;\big|\; s\in \{1, \dots, n \}^{k}\;\&\; (x_{s_1}, \ldots, x_{s_k})\in R_k^\mathcal X\big\}
$$ 
is  $ t_{k,n}$-dense in $R_k^\mathcal X$, but so that they are not $\frac {r_n}2$-dense, respectively $\frac{t_{k,n}}2$-dense (or, in case these sets exhaust $X$ or $R_k^\mathcal X$, we let $r_n=0$, respectively $t_{k,n}=0$). We note that, for $p\leqslant k\leqslant n$, we have $S^k_p=S^n_p\cap \{1,\ldots, k\}^p$, so 
$$
S_p^p\subseteq S_p^{p+1}\subseteq S_p^{p+2}\subseteq\ldots\subseteq \bigcup_nS^n_p=\{s\in \mathbb N^p\mid (x_{s_1},\ldots, x_{s_p})\in R_p^\mathcal X\}.
$$
Observe also that $\lim_{n\to\infty}r_n=0$ and $\lim_{n\to\infty}t_{k, n}=0$ for all $k$.

Set $\zeta_n=(S_k^n,r_k, t_{p,k})_{p\leqslant k\leqslant n}$ and note that $(x_1, \ldots, x_n)\in D_{\zeta_n}(\mathcal X)$ and hence
$$
[d(x_i,x_j)]_{i,j\leqslant n}\in E_{\zeta_n,1}(\mathcal X) \subseteq E_{c\zeta_n, c}(\mathcal Y)
$$
for all $n$.

So let $ (y^{n}_{1}, \dots, y^{n}_{n}) $ be a tuple in $Y$ witnessing that $[d(x_i,x_j)]_{i,j\leqslant n}\in  E_{c\zeta_n, c}(\mathcal Y)$, i.e., 
\begin{enumerate}
\item $\frac 1cd(x_i,x_j)\leqslant d(y^n_i, y^n_j)\leqslant c d(x_i,x_j)$ for all $i,j\leqslant n$,
\item $ \{y^n_{1}, \dots, y^n_{k}\} $ is  $ cr_k$-dense in $Y$ for all $k\leqslant n$,
\item $(y^n_{s_1}, \ldots, y^n_{s_k})\in R_k^\mathcal Y$ for all $s\in S_k^n$, and
\item the set  $\big\{ (y^n_{s_1}, \ldots, y^n_{s_p})\;\big|\; s\in S_p^n\cap\{1,\ldots, k\}^p\big\}$ is  $ct_{p,k}$-dense in $R_p^\mathcal Y$ for all $p\leqslant k\leqslant n$.	
\end{enumerate}

By diagonalization, we may produce an increasing sequence $(n_l)$ so that $y_k=\lim_l y^{n_l}_k$ exists for every $k$. It then easily follows that
\begin{enumerate}
\item $\frac 1cd(x_i,x_j)\leqslant d(y_i, y_j)\leqslant c d(x_i,x_j)$ for all $i,j$,
\item $ \{y_{1}, \dots, y_{k}\} $ is  $ cr_k$-dense in $Y$,
\item $(y_{s_1}, \ldots, y_{s_k})\in R_k^\mathcal Y$ whenever $(x_{s_1}, \ldots, x_{s_k})\in R_k^\mathcal X$, and 
\item the set  $\big\{ (y_{s_1}, \ldots, y_{s_p})\;\big|\; s\in \{1,\ldots, k\}^p\;\&\; (x_{s_1}, \ldots, x_{s_p})\in R_p^\mathcal X\big\}$ is  $ct_{p,k}$-dense in $R_p^\mathcal Y$ for all $p\leqslant k\leqslant n$.	
\end{enumerate}

Since $\lim_{k\to \infty} r_k=\lim_{k\to \infty} t_{p,k}=0$ for all $p$, it follows that $(y_k)$ is a dense sequence in $Y$ and that 
$$
\big\{ (y_{s_1}, \ldots, y_{s_p})\;\big|\; (x_{s_1}, \ldots, x_{s_p})\in R_p^\mathcal X\big\}
$$
is dense in $R^\mathcal Y_p$ for every $p$. The map $x_i\mapsto y_i$ therefore extends to a bi-Lipschitz isomorphism between $X$ and $Y$ with constant $c$, mapping each $R_p^\mathcal X$ surjectively onto $R_p^\mathcal Y$. In other words, $\mathcal X$ and $\mathcal Y$ are bi-Lipschitz isomorphic.
\end{proof}

\bibliographystyle{plain}
\bibliography{cms}

\end{document}